\documentclass[a4paper, reqno]{amsart}

\usepackage[english]{babel}
\usepackage{amsmath,amssymb,amsfonts,dsfont}
\numberwithin{equation}{section}
\usepackage{lmodern}
\usepackage{fullpage}
\usepackage{csquotes}

\setlength{\parskip}{6pt}

\usepackage{amsmath,amsthm,mathtools}
\usepackage{graphicx}
\usepackage[dvipsnames,table,xcdraw]{xcolor}
\definecolor{myred}{RGB}{228,26,28}
\definecolor{myblue}{RGB}{55,124,184}
\definecolor{mygreen}{RGB}{77,175,74}
\definecolor{myorange}{RGB}{255,127,14}
\definecolor{myviolet}{RGB}{148,103,189}

\usepackage[colorinlistoftodos]{todonotes}
\usepackage[colorlinks=true,urlcolor=myblue,linkcolor=myblue,runcolor=myred,citecolor=myred,linktoc=all]{hyperref}
\usepackage{caption}
\captionsetup{font=small, labelfont={color=myblue,sc}, labelsep=endash}
\usepackage{enumitem}
\setlist[itemize]{label={$\vcenter{\hbox{\tiny$\bullet$}}$}}
\usepackage[boxruled, lined]{algorithm2e}

\usepackage{subfigure}

\usepackage{tikz, pgf}
\usetikzlibrary{matrix, calc, backgrounds, arrows.meta}
\usepackage{tikz-3dplot}
\usepackage{tkz-euclide}
\usepackage{pgfplots}
\usepackage{pgfplotstable}
\pgfplotsset{compat=1.8}
\usepackage{booktabs}

\newcommand{\Forall}{\forall\ }

\def\R{{\mathbb R}}
\def\N{{\mathbb N}}
\def\Ran{{\text{Ran}}}

\newcommand{\Hc}{\mathcal{H}}
\renewcommand{\Mc}{\mathcal{M}}
\newcommand{\Tc}{T}
\newcommand{\Lc}{\mathcal{L}}
\newcommand{\Cc}{\mathcal{C}}

\newcommand{\Id}{\text{Id}}

\newcommand{\oo}[1]{\prt{#1}_\text{oo}}
\newcommand{\ov}[1]{\prt{#1}_\text{ov}}
\newcommand{\vo}[1]{\prt{#1}_\text{vo}}
\newcommand{\vv}[1]{\prt{#1}_\text{vv}}

\newcommand{\gpp}{\ddot\gamma}
\newcommand{\Op}{{\Omega_*}}
\newcommand{\auf}{\Phi}

\newcommand{\hc}{\text{sym}}
\DeclareMathOperator*{\Tr}{Tr}
\newcommand{\norm}[1]{\left\lVert#1\right\rVert}
\newcommand{\normF}[1]{\left\lVert#1\right\rVert_\text{F}}

\renewcommand{\d}{\text{d}}
\newcommand{\prt}[1]{\left( #1 \right)}
\newcommand{\set}[1]{\left\{ #1 \right\}}
\newcommand{\cro}[1]{\left\langle  #1 \right\rangle}
\newcommand{\croF}[1]{\left\langle  #1 \right\rangle_\text{F}}

\newcommand{\ie}{\emph{i.e.}\ }
\renewcommand{\i}{\text{i}}
\renewcommand{\lq}{\leqslant}
\renewcommand{\le}{\leqslant}
\renewcommand{\ge}{\geqslant}
\newcommand{\gq}{\geqslant}

\theoremstyle{plain}
\newtheorem{theorem}{Theorem}[section]

\newtheorem{lemma}[theorem]{Lemma}

\newtheorem{proposition}[theorem]{Proposition}
\newtheorem{asmp}[theorem]{Assumption}
\theoremstyle{remark}
\newtheorem{remark}[theorem]{Remark}
\newtheorem{expl}[theorem]{Example}

\addto\extrasenglish{%
}
\addto\extrasenglish{%
}

\title{Convergence analysis of direct minimization and self-consistent iterations}
\author{Eric Canc\`es, Gaspard Kemlin, Antoine Levitt}
\date{\today}

\begin{document}
\maketitle

\begin{abstract} This article is concerned with the numerical solution
  of subspace optimization problems, consisting of minimizing a smooth
  functional over the set of orthogonal projectors of fixed rank. Such
  problems are encountered in particular in electronic structure
  calculation (Hartree-Fock and Kohn-Sham Density Functional Theory -- DFT -- models). We compare from a
  numerical analysis perspective two simple representatives, the damped self-consistent field (SCF) iterations and the gradient descent algorithm,
  of the two classes of methods competing in the field: SCF and direct
  minimization methods. We derive asymptotic rates of convergence for
  these algorithms and analyze their dependence on the spectral gap
  and other properties of the problem. Our theoretical results are
  complemented by numerical simulations on a variety of examples, from
  toy models with tunable parameters to realistic Kohn-Sham
  computations. We also provide an example of chaotic behavior of the
  simple SCF iterations for a nonquadratic functional.
\end{abstract}

\section{Introduction}
This paper is concerned with the convergence behavior of algorithms to
solve the \textit{subspace optimization problem}
\begin{equation}
  \label{eq:problem_with_trace}
  \min \Big\{E(P) \;\Big|\; P\in\R^{N_b\times N_b},\ P^2 = P = P^*,\ \Tr(P)=N\Big\}
\end{equation}
consisting of optimizing a $C^2$ function $E: \R^{N_b\times N_b} \to \R$ over the set of rank-$N$ orthogonal projectors
$P$. Here $P^*$ denotes the adjoint (transpose) of $P$. This problem can
also be reformulated as
\begin{align} \label{eq:problem_orbitals}
  \min\set{E\left(\sum_{i=1}^{N} \phi_{i} \phi_{i}^{*}\right) \;\middle|\;
    \phi_{i} \in \R^{N_{b}},\  \phi_{i}^{*} \phi_{j} = \delta_{ij} \quad \Forall
    i,j \in \{1,\dots,N\}},
\end{align}
using an orthonormal basis $(\phi_{i})_{i=1,\dots,N}$ for the subspace
$\Ran (P)$. This problem is of interest in a number of contexts, such
as matrix approximation, computer vision
\cite{absilOptimizationAlgorithmsMatrix2008}, and electronic structure
theory~\cite{CDKLM2003,bookHelgaker,LinLu,linNumericalMethodsKohn2019b,martin2004,saadNumericalMethodsElectronic2010},
the latter of which being the main motivation for this work.

Let $H(P) = \nabla E(P)$. The first order conditions for
problem \eqref{eq:problem_with_trace} is
\begin{align*}
  P H(P)(1-P) = (1-P) H(P) P = 0.
\end{align*}
Up to an appropriate choice for the orthonormal basis $(\phi_{i})_{i=1,\dots,N}$
of $\Ran(P)$, this yields
\begin{align}
  \label{eq:nonlinear_eigenvector_problem}
  H(P) \phi_{i} = \varepsilon_{i} \phi_{i},
\end{align}
which reveals an alternative interpretation of this problem as a
\textit{nonlinear eigenvector problem} (to be distinguished from
\textit{nonlinear eigenvalue problems} of the form
$A(\varepsilon) \phi = 0$, where $A : \R \to \R^{N_b \times N_b}$). In
the case when $E(P) = \Tr(H_{0} P)$ for a fixed symmetric matrix
$H_{0}$, one recovers the classical eigenvalue problem
$H_{0} \phi_{i} = \varepsilon_{i} \phi_{i}$. At a minimizer of
\eqref{eq:problem_with_trace}, the
$(\varepsilon_{i})_{i = 1,\dots, N}$ are the lowest eigenvalues of
$H_{0}$, counting multiplicities.

Problems of the form \eqref{eq:problem_with_trace} are found in the
Hartree-Fock and Kohn-Sham theories of electronic
structure~\cite{bookHelgaker,martin2004}, both approximations of the
many-body Schr\"odinger equation. In this context, the $\phi_{i}$ are
(discretized) \textit{orbitals}, the projector $P$ is the
\textit{density matrix}, and the energy $E(P)$ includes linear
contributions from the kinetic and external potential energy of the
electrons, as well as nonlinear terms arising from electron-electron
interaction. Another notable problem of this form is the nonlinear
Schr\"odinger or Gross-Pitaevskii equation for Bose-Einstein
condensates~\cite{Bao2013}, where $N=1$. In all these cases, the
first-order condition \eqref{eq:nonlinear_eigenvector_problem} is
interpreted as a \textit{self-consistent} or \textit{mean-field}
equation: the particles behave as independent particles in an
effective Hamiltonian $H(P)$ (also known as the Fock matrix) involving
the mean-field they create. In the rest of this paper, we will work on
the formulation \eqref{eq:problem_with_trace} without specifying $E$
for generality.

The minimization problem \eqref{eq:problem_with_trace} is compact but
nonconvex: there exists at least one minimizer, but the minimizer might not be
unique, and local minima might not be global ones. Solving this
optimization problem is of considerable practical interest, and
algorithms for doing so date back to the early days of quantum
mechanics~\cite{hartree_1928}. The first introduced and still most popular approach is the
\textit{self-consistent field} (SCF) method, which, in its original version~\cite{McWeeny1956,Roothaan1951}, works as follows: if $P^{k}$ is the current
iterate of the algorithm, $P^{k+1}$ is found by solving
\eqref{eq:nonlinear_eigenvector_problem} for the fixed matrix $H(P^k)$:
\begin{align*}
  H(P^{k}) \phi_{i}^{k} = \varepsilon_{i}^{k} \phi_{i}^{k},\quad (\phi_{i}^{k})^{*}\phi_{j}^{k} = \delta_{ij}
\end{align*}
with the $\varepsilon_{i}^{k}$ sorted in non-decreasing order, and
building $P^{k+1}$ as
\begin{align*}
  P^{k+1} = \sum_{i=1}^{N} \phi_{i}^{k} (\phi_{i}^{k})^{*}.
\end{align*}
This algorithm assumes the \textit{Aufbau} property, which is that at
a minimum $P_*$ we have $P_* = \sum_{i=1}^{N} \phi_{i} \phi_{i}^{*}$ with
$\phi_{i}$ a system of orthogonal eigenvectors associated with the
lowest $N$ eigenvalues of $H(P_*)$. This property holds for the
(spin-unconstrained) Hartree-Fock model \cite{Bach1994} and the
Gross-Pitaevskii models without magnetic field
\cite{cances2010numerical}, usually holds for molecular systems in the
Kohn-Sham model, but does not hold in general for Gross-Pitaevskii models with
strong magnetic fields.

This basic procedure converges for
systems where the nonlinearity is weak, but fails to converge
otherwise~(see \cite{cancesConvergenceSCFAlgorithms2000c} for a
comprehensive mathematical analysis of this behavior when the
functional $E$ is a sum of a linear and a quadratic term in $P$, which
is the case for the Hartree-Fock model). A solution is to
\textit{damp} this procedure, and {\em mix} the iterates to accelerate
convergence. This gives rise to a variety of SCF algorithms, among
which Broyden-like and Anderson-like mixing
algorithms~\cite{Johnson1988,Marks2008,Raczkowski2001,Srivastava1984},
the Direct Inversion in the Iterative Space (DIIS)
algorithm~\cite{Kresse1996,Pulay1980,Pulay1982}, the Optimal Damping
Algorithm \cite{cancesCanWeOutperform2000b} (ODA), and the Energy-DIIS
(EDIIS) algorithm combining the latter two
approaches~\cite{Kudin2002}.

A second class of algorithms solves the minimization problem
\eqref{eq:problem_with_trace} directly. The minimization set
$\{P \in \R^{N_{b} \times N_{b}},\ P^{2} = P^{*} = P,\ \Tr P = N\}$ is
diffeomorphic to the Grassmann manifold of the $N$-dimensional vector
subspaces of $\R^{N_b}$. This set is naturally equipped with the
structure of a Riemannian manifold, and this allows the use of Riemann
optimization
algorithms~\cite{absilOptimizationAlgorithmsMatrix2008,Edelman1998}.
Direct minimization algorithms are preferred for the Gross-Pitaevskii
model with magnetic
fields~\cite{ANTOINE201792,Danaila2017,heid2019gradient,henning2018sobolev},
for which the {\em Aufbau} principle is not satisfied in general.
Gradient-type~\cite{Alouges2009,Zhuang,Mostofi2003,VYP,ZZWZ},
Newton-type \cite{Bacskay1981,Chaban1997,ZBJ}, and trust-region
methods have also been designed to solve~\eqref{eq:problem_with_trace}
for larger values of $N$. At the time of writing, direct minimization
algorithms are less popular than SCF algorithms in electronic
structure calculation, where $N$ can be very large, but it is not
clear whether this is for sound scientific reasons or because SCF
algorithms have been implemented and optimized for decades in the main
production codes, which has not been the case for direct minimization
algorithms.

While the convergence of several SCF and direct minimization
algorithms has been analyzed from a mathematical point of view (see
e.g.~\cite{chupin2020convergence,levittConvergenceGradientbasedAlgorithms2012b,liuConvergenceSelfConsistentField2013a,Rohwedder2011,upadhyayaDensityMatrixApproach2018a,yangConvergenceSelfConsistentField2009}
and references therein), the two approaches have not been compared in
a systematic way to our knowledge. The purpose of this paper is to
contribute to fill this gap, by focusing on very simple
representatives of each class, namely the damped SCF iteration and the
gradient descent. We emphasize that neither of these two algorithms is
a practical choice as is. The SCF iteration should be accelerated (for
instance using the Anderson acceleration technique), and the gradient
information in direct minimization methods should rather be used as
part of a quasi-Newton method (such as the limited-memory BFGS
algorithm \cite{absilOptimizationAlgorithmsMatrix2008}). Depending on
the exact problem at hand, all these methods should be preconditioned
to avoid issues related to small mesh sizes (which leads to a
divergence of the kinetic energy term) and/or large computational
domains (which can lead to a divergence of the Coulomb energy, or the
confining potential). We refer to \cite{woodsComputingSelfconsistentField2019} for a
recent review in the context of the Kohn-Sham equations for solids.
Rather, in this paper, we aim to focus on the very simplest
representative of each general strategy (SCF and direct minimization).
The investigation of these two basic algorithms is informative on the
strengths and weaknesses of the two classes, and is a first step in
the analysis of more complex methods.

The paper is organized as follows.
In Section~\ref{sec:optim}, we recall some results about optimization on Grassmann manifolds, in
particular the first and second order optimality conditions, and prove
preparatory lemmas. In Section~\ref{sec:algo}, we present the two algorithms that are in the
scope of this paper: a fixed-step gradient descent and a damped SCF
algorithm. We prove their local convergence as long as the step is
small enough and we derive convergence rates. We find that the
convergence rates depend on the spectral radius of operators (acting
on $\R^{N_{b} \times N_{b}}$) of the form $1 - \beta J$, with $\beta$
the fixed step and $J = \Op + K_*$ for the gradient descent,
$J = 1 + \Op^{-1}K_*$ for the SCF algorithm, where the operators $\Op$
and $K_*$ are specified in the next section. Let us just mention at
this stage that the lowest eigenvalue of $\Op$ is equal to the
spectral gap between the $N^{\rm th}$ and $(N+1)^{\rm st}$ eigenvalues
of $H(P_*)$, allowing us to analyze the convergence rates of the algorithms
in terms of natural quantities of the problem. This also shows that
the damped SCF algorithm can be seen as a matrix splitting of the
fixed-step gradient descent algorithm.

In Section~\ref{sec:num_res}, we compare the two algorithms on
several test problems. First, we focus on a toy model for which we can
easily tune the gap and observe some fundamental differences between
SCF and direct minimization algorithms, in agreement with the
mathematical results established in Section~\ref{sec:algo}. We also
provide an example of chaos in SCF iterations, complementing the
results of
\cite{cancesConvergenceSCFAlgorithms2000c,levittConvergenceGradientbasedAlgorithms2012b}
in the case of a non-quadratic objective functional $E$. Then, we
analyse a 1D Gross-Pitaevskii model ($N=1)$ and its fermionic
counterpart for $N=2$, for which we investigate the behavior of the
algorithms when the gap closes. We conclude with an example from
electronic structure calculation: a Silicon crystal, in the framework
of Kohn-Sham DFT, where we show in particular that accelerated SCF
algorithms are less sensitive to small gaps than the simple damped
SCF. Finally, in Section \ref{sec:conclusion} we draw conclusions and
outline perspectives for future work.

\section{Optimization on  Grassmann manifolds}
\label{sec:optim}

We focus in this paper on the case of real symmetric matrices, but the study
can be easily extended to complex hermitian matrices.
Let $\Hc \coloneqq \R^{N_b\times N_b}_\text{sym}$ be the vector space of
$N_b \times N_b$ real symmetric matrices endowed with the Frobenius inner
product $\croF{A,B} \coloneqq \Tr(AB)$.
Let
\[ \Mc \coloneqq \set{P\in\Hc \;\middle|\; P^2=P} \text{ and }
\Mc_N \coloneqq \set{P\in\Hc \;\middle|\; P^2=P,\ \Tr(P)=N}.\]
From a geometrical point of view, $\Mc$ is a compact subset of $\Hc$ with $N_b+1$
connected components $\Mc_N$, $N = 0,\dots,N_b$,
each of them being characterized by the value of $\Tr(P)$,
namely the rank of the orthogonal projector $P$, and being diffeomorphic to
the Grassmann manifold $\text{Grass}(N,N_b)$
\cite{absilOptimizationAlgorithmsMatrix2008}.
From now on, we fix
the number of electrons $N$ and we seek the local minimizers of the problem
\begin{equation}
  \label{mini_mani}
  \min_{P\in\Mc_N} E(P),
\end{equation}
where $E : \Hc \to \R$ is a discretized energy functional, for which some examples are given
below.

\begin{expl}
  As an example, we study a discrete Gross-Pitaevskii model in
  \autoref{sec:validate}. Other models from electronic structure
  can be considered, such as the discretized
  Hartree-Fock or Kohn-Sham models, where the energy is of the form
  \[
    E(P) \coloneqq \Tr(H_0P) + E_\text{nl}(P)
  \]
  with $H_0$ being the core Hamiltonian (representing the kinetic energy and the
  external potential) and $E_\text{nl}$ a nonlinear energy functional
  depending on the model (representing the interaction between electrons).
  For instance, for the Hartree-Fock model,
  \[
    E_\text{nl}(P) \coloneqq \frac{1}{2}\Tr(G(P)P) \qquad \text{where} \qquad
    (G(P))_{ij} \coloneqq \sum_{k,l = 1}^{N_b} A_{ijkl}P_{kl} \quad \Forall i,j=1,\dots,N_b,
  \]
  with $A$ a symmetric tensor of order 4.
  For more details on these models or electronic structure in general, we
  refer to \cite{CDKLM2003,linNumericalMethodsKohn2019b,saadNumericalMethodsElectronic2010}.
\end{expl}

In plane-wave, finite differences, finite elements or wavelets electronic
structure calculation codes, the size $N_b$
of the discretized space is in practice much larger than the number
$N$ of electrons. Therefore, it is not practical to store and
manipulate the (dense) matrix $P$. Instead, algorithms work on
the orbitals $(\phi_i)_{i=1,\dots,N}$ introduced in
\eqref{eq:nonlinear_eigenvector_problem}. The density matrix $P$ is then recovered as
  \[
    P = \sum_{i=1}^N \phi_i\phi_i^*.
  \]
  All the results in this article are presented in the density matrix
framework. However, the algorithms we study can be expressed in a way that
avoids ever forming the density matrix. We refer to
\cite{woodsComputingSelfconsistentField2019} for details.

We will need two assumptions for our results.
\begin{asmp} \label{asmp1}
  The energy functional $E: \Hc \to \R$ is of class $C^2$ (twice continuously differentiable).
\end{asmp}
Assumption~\ref{asmp1} is true for Hartree-Fock models. For Kohn-Sham
models, it is true when the density $\rho = \sum_{i=1}^N |\phi_i|^2$
is uniformly bounded avay from zero,
which is the case for instance in condensed
phase systems. Most of the results presented in this article are local in
nature, and therefore this assumption can be relaxed to local regularity.

\begin{asmp} \label{asmp2} $P_*\in\Mc_N$ is a nondegenerate local minimizer
  of \eqref{mini_mani}
  in the sense that there exists some $\eta > 0$ such
  that, for $P\in\Mc_N$ in a neighborhood of $P_*$, we have
  \[
    E(P) \geqslant E(P_*) + \eta\normF{P-P_*}^2.
  \]
\end{asmp}
It is very hard in most practical situations to check this assumption,
but it seems to be verified in practice. Notable exceptions are
systems invariant with respect to continuous symmetry groups, in
which case $E(P) = E(P_{*})$ for all $P$ in the orbit of $P_{*}$ along
the symmetry group. In this case, the assumption can not be true, and
$\normF{P-P_{*}}$ must be replaced by the distance from $P$ to the
orbit of $P_{*}$. Our results can be extended to this case up to
quotienting $\Hc$ by the symmetry group.

Throughout the paper, we will use the following notation:
\begin{itemize}
  \item $H(P) \coloneqq \nabla E(P)$ is the gradient, and $H_* \coloneqq H(P_*)$;
  \item $K(P) \coloneqq \Pi_P \nabla^2E(P)\Pi_P$ is the Hessian
    projected onto the tangent space at $P$, and $K_* \coloneqq K(P_*)$ (the
    projection $\Pi_P$ is defined below in Proposition~\ref{prop:tangent}).
\end{itemize}

\subsection{First-order condition}
\label{sec:first-order}
To study the first-order optimality conditions, we start by recalling some
classical results about the geometry of the manifold
$\Mc_N$.
\begin{proposition}
  \label{prop:tangent}
  $\Mc_N$ is a smooth real manifold and its tangent space $\Tc_P\Mc_N$ at $P\in\Mc_N$ is given by
  \[
    \Tc_P\Mc_N = \set{X\in\Hc \;\middle|\; PX + XP = X,\ \Tr(X) = 0} =
    \set{X\in\Hc \;\middle|\; PXP = (1-P)X(1-P) = 0}.
  \]
  The orthogonal projection $\Pi_P$ on $\Tc_P\Mc_N$ for the Frobenius inner product is
  \begin{equation}
    \label{eq:prop_proj}
    \Forall X\in\Hc,\quad \Pi_P(X) = PX(1-P) + (1-P)XP = [P,[P,X]],
  \end{equation}
  where $[A,B] \coloneqq AB - BA$.
\end{proposition}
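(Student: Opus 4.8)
The plan is to establish the three assertions in sequence: that $\Mc_N$ is a smooth manifold, that its tangent space has the stated form, and that $\Pi_P$ is the asserted orthogonal projection.

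First I would verify the manifold structure. The cleanest route is to exhibit $\Mc_N$ as a level set of a submersion, or equivalently to use the fact that $\Mc_N$ is an orbit of the smooth action of the compact group $O(N_b)$ on $\Hc$ by conjugation, $U \cdot P = UPU^*$. Since the action is smooth and the isotropy subgroup of $P$ (matrices commuting with $P$, i.e. $O(N)\times O(N_b-N)$ up to conjugation) is closed, the orbit is an embedded submanifold diffeomorphic to the homogeneous space $O(N_b)/(O(N)\times O(N_b-N))$, which is $\mathrm{Grass}(N,N_b)$. Alternatively, one fixes a reference projector $P_0 = \mathrm{diag}(I_N, 0)$ and uses the local chart sending a block $\begin{pmatrix} 0 & B^* \\ B & 0\end{pmatrix}$-type perturbation to a nearby projector via an explicit parametrization; this is the standard Grassmann chart and is routine.

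Next, the tangent space. Differentiating the constraint $P(t)^2 = P(t)$ at $t=0$ with $P(0)=P$, $\dot P(0) = X$, gives $PX + XP = X$; together with $\frac{d}{dt}\Tr(P(t)) = \Tr(X) = 0$ this shows $\Tc_P\Mc_N$ is contained in the first set. Equality of dimensions (both equal $N(N_b-N)$, which I would confirm by the block decomposition) gives the reverse inclusion. For the second description: decompose any $X\in\Hc$ into its four blocks $PXP$, $PX(1-P)$, $(1-P)XP$, $(1-P)X(1-P)$ relative to the splitting $\R^{N_b} = \Ran(P)\oplus\Ker(P)$; then multiply the relation $PX+XP=X$ on left and right by $P$ and by $1-P$ to read off $2PXP = PXP$, hence $PXP=0$, and similarly $(1-P)X(1-P)=0$. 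Conversely, if those two vanish, $X = PX(1-P)+(1-P)XP$ satisfies $PX+XP = X$ directly, and $\Tr(X)=0$ because the diagonal blocks vanish.

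Finally, the projection formula. Given the block decomposition $X = PXP + PX(1-P) + (1-P)XP + (1-P)X(1-P)$, the four summands are mutually orthogonal for the Frobenius inner product (cyclicity of the trace kills cross terms, e.g. $\Tr\big((PXP)(PX(1-P))\big) = \Tr\big(PXP X(1-P)P\big) = 0$). The middle two summands lie in $\Tc_P\Mc_N$ by the second description just proved, and the first and last are orthogonal to it, so the orthogonal projection onto $\Tc_P\Mc_N$ is exactly $\Pi_P(X) = PX(1-P) + (1-P)XP$. The identity with $[P,[P,X]]$ is a direct expansion using $P^2 = P$: compute $[P,X] = PX - XP$, then $[P,[P,X]] = P(PX-XP) - (PX-XP)P = PX - PXP - PXP + XP = PX + XP - 2PXP$, and since $PXP$ here should be compared against $PX(1-P)+(1-P)XP = PX + XP - 2PXP$ once one also notes $(1-P)X(1-P) = X - PX - XP + PXP$ is not what appears — rather, expanding $PX(1-P) + (1-P)XP = PX - PXP + XP - PXP = PX + XP - 2PXP$, which matches $[P,[P,X]]$ exactly.

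The main obstacle is purely expository: none of the steps is genuinely hard, but one must be careful to set up the block decomposition and the dimension count cleanly so that the two equivalent descriptions of $\Tc_P\Mc_N$, the orthogonality of the four blocks, and the commutator identity all fall out of the single relation $P^2 = P$ without circular reasoning. Since these are classical facts about the Grassmann manifold (see \cite{absilOptimizationAlgorithmsMatrix2008,Edelman1998}), one may alternatively simply cite them, but the block-decomposition computation above is short enough to include in full.
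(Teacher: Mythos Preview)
Your argument is correct. The paper itself does not give a proof of this proposition at all: it simply states that ``This classical result is proved in e.g.\ \cite[Section 3.4]{absilOptimizationAlgorithmsMatrix2008}'' and then records the block-matrix description of $\Pi_P$. So your proposal goes well beyond what the paper does, supplying the orbit/homogeneous-space argument for smoothness, the differentiation of the constraint for the tangent space, and the block-orthogonality computation for the projection.

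One minor remark: the condition $\Tr(X)=0$ in the first description of $\Tc_P\Mc_N$ is in fact redundant once $PX+XP=X$ holds, since (as you show) this relation forces $PXP=0$ and $(1-P)X(1-P)=0$, and hence $\Tr(X)=\Tr(PXP)+\Tr((1-P)X(1-P))=0$ automatically. This slightly streamlines your dimension count. Also, your expansion of $[P,[P,X]]$ is right but the prose around it is a little tangled; it would read more smoothly to simply write $[P,[P,X]]=P^2X-2PXP+XP^2=PX+XP-2PXP=PX(1-P)+(1-P)XP$ in one line.
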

This classical result is proved in e.g.
\cite[Section 3.4]{absilOptimizationAlgorithmsMatrix2008}. Using the fact that
$\Hc = \Ran(P) \oplus \Ran(1-P)$ and the induced decomposition of $P\in\Mc_N$ and $X\in\Hc$ as
\begin{equation}
  \label{eq:XP}
  P =
  \begin{bmatrix}
    I_N & 0 \\ 0 & 0
  \end{bmatrix},\quad
  X =
  \begin{bmatrix}
    \oo{X} & \ov{X} \\ \vo{X} & \vv{X}
  \end{bmatrix},
\end{equation}
the projection $\Pi_P$ is given by
\[
  \Pi_P(X) =
  \begin{bmatrix}
    0 & \ov{X} \\ \vo{X} & 0
  \end{bmatrix}.
\]
Here the subscript \enquote{o} (resp. \enquote{v}) stand for
\emph{occupied} (resp. \emph{virtual}).
The first-order optimality condition at $P_*$ is $\Pi_{P_*}(H_*) = 0$,
which can be formulated as follows:
\begin{equation}
\label{eq:fo-opt}
\boxed{\textbf{First-order optimality condition: } P_*H_*(1-P_*) = (1-P_*)H_*P_* = 0.}
\end{equation}
Note that this condition can be rewritten as $[H_*,P_*] = 0$, showing that
$H_*$ and $P_*$ can be codiagonalized. Let
$(\phi_{k})_{1\leqslant k \leqslant N_b}$ be an orthonormal basis of
eigenvectors of $H_{*}$ associated with the
eigenvalues $(\varepsilon_{k})_{1 \leqslant k \leqslant N_{b}}$ sorted
in ascending order. Then $P = \sum_{i \in \mathcal{I}} \phi_{i}\phi_{i}^{*}$,
where $\mathcal{I} \subset \{1,\dots,N_{b}\}, \,|\mathcal{I}| = N$ is the set of occupied
orbitals. The minimizer $P_*$ is said to satisfy
\begin{itemize}
\item the \emph{Aufbau} principle if $\mathcal{I} = \{1,\dots,N\}$;
\item the strong \emph{Aufbau} principle if
$\mathcal{I} = \{1,\dots,N\}$ and if in addition $\varepsilon_{N} < \varepsilon_{N+1}$, in which case $P_* = \sum_{i=1}^N \phi_i\phi_i^*$.
\end{itemize}

\subsection{Second-order condition}
\label{sec:second-order}
We derive here the second-order optimality condition from the nondegeneracy
of the minimum (Assumption~\ref{asmp2}).

Let $X\in\Tc_{P_*}\Mc_N$, $I$ be a real interval containing $0$
and $\gamma : I\to\Mc_N$ be a smooth path such that
$\gamma(0) = P_*$ and $\dot\gamma(0) = X$. An example of a possible
$\gamma$ is given in Section~\ref{sec:retrac}.
We expand
\[
\begin{split}
  E\prt{\gamma(t)}
  &= E(P_*) + t\croF{H_*,X} + \frac{t^2}{2}\Big(\croF{H_*,\gpp(0)} + \croF{X,
      \nabla^2E(P_*) X}\Big) + o(t^2) \\
  &= E(P_*) + \frac{t^2}{2}\Big(\croF{H_*,\gpp(0)} + \croF{X,
    K_* X}\Big) + o(t^2)
\end{split}
\]
as $H_*$ is orthogonal to $\Tc_{P_*}\Mc_N$ at the minimum. Differentiating the
relation $\gamma(t)^2 = \gamma(t)$ at $t=0$, we get
\[
P_*\gpp(0) + \gpp(0)P_* + 2X^2 = \gpp(0),
\]
from which we obtain the following two relations on the diagonal blocks of $\gpp(0)$
in the decomposition \eqref{eq:XP}:
\[
\label{eq:gpp}
\frac{1}{2} \oo{\gpp(0)} = -\oo{X^2} = -\ov{X}\vo{X},\qquad
\frac{1}{2} \vv{\gpp(0)} = \vv{X^2} = \vo{X}\ov{X}.
\]
Thus, since
$\vo{H_*} = \ov{H_*}^* = 0$ at the minimum, we have
\[
\begin{split}
  \croF{H_*,\gpp(0)} &= \Tr\prt{
    \begin{bmatrix}
      \oo{H_*} & 0 \\ 0 & \vv{H_*} \\
    \end{bmatrix}
    \begin{bmatrix}
      \oo{\gpp(0)} & \ov{\gpp(0)} \\ \vo{\gpp(0)} & \vv{\gpp(0)} \\
    \end{bmatrix}
  } \\
  &= 2\Tr\Big(-\oo{H_*}\ov{X}\vo{X}\Big)
  + 2\Tr\Big(\vv{H_*}\vo{X}\ov{X}\Big) \\
  &= 2\Tr\Big(-\vo{X}\oo{H_*}\ov{X}\Big)
  + 2\Tr\Big(\ov{X}\vv{H_*}\vo{X}\Big) \\
  &= \Tr\Big(X(\Op X)\Big),
\end{split}
\]
where the operator $\Op : T_{P_*}\Mc_N \to T_{P_*}\Mc_N$ is defined
as
\begin{equation}
  \begin{split}
  \Op X &\coloneqq
  P_{*} X (1-P_{*})H_* - H_{*} P_{*} X(1-P_{*}) + \hc \\
    &= \begin{bmatrix}
      0 & \ov{X}\vv{H_*} - \oo{H_*}\ov{X} \\
      \vv{H_*}\vo{X} - \vo{X}\oo{H_*} & 0 \\
    \end{bmatrix},
  \end{split}
\label{eq:O}
\end{equation}
where \enquote{$\hc$} stands for the transpose of
the previous expression.
Introducing the operator
\begin{equation}
\label{eq:hess_mani}
\Op + K_* :
\Tc_{P_*}\Mc_N \to  \Tc_{P_*}\Mc_N,
\end{equation}
one gets in the end
\[
E\prt{\gamma(t)} = E(P_*) + \frac{t^2}{2}\croF{X,
  (\Op + K_*) X} + o(t^2).
\]
At the critical point $P_{*}$, the second order expansion of
$E\prt{\gamma(t)}$ only depends on $X = \dot \gamma(0)$, a common
feature in constrained optimization. The operator $\Op + K_*$ can be
interpreted as the Hessian of the energy on the manifold, or
alternatively as the partial Hessian of the Lagrangian on $\Hc$. The
operator $\Op$ represents the influence of the curvature of the
manifold on the Hessian of $E$.

As $P_*$ is a nondegenerate minimum in the sense of Assumption~\ref{asmp2}, we have the
\begin{equation}
\label{eq:KO_spd}
\boxed{ \textbf{Second-order optimality condition: } \Forall
  X\in\Tc_{P_*}\Mc_N,\quad\croF{X, (\Op + K_*) X} \geqslant \eta \normF{X}^2.}
\end{equation}

\begin{remark}[Structure of $\Op$ and link with the \textit{Aufbau} principle]
  \label{rmk:O}
  Let $P_*$ be a nondegenerate minimizer of \eqref{mini_mani} in the
  sense of Assumption~\ref{asmp2}.
  Denoting by $A_{kl}$
  the component along $\phi_k\phi_l^*$ of the matrix $A\in\Hc$,
  the operator $\Op$
  defined in \eqref{eq:O} can alternatively be defined by
  \[
    \Forall X\in\Tc_{P_*}\Mc_N,\quad (\Op X)_{ia} = (\varepsilon_a -
    \varepsilon_i) X_{ia} \text{  and  } (\Op X)_{ai} = (\varepsilon_a -
    \varepsilon_i) X_{ai} \text{ for } i \in \mathcal{I}, \ a \notin \mathcal{I},
  \]
  where we have used the standard notation in chemistry of using the
  subscripts $i$ for occupied and $a$ for virtual orbitals
  ($\mathcal{I}$ is the set of occupied orbitals).

  In the case when $E(D) = \Tr(H D)$ for some fixed symmetric matrix
  $H \in \Hc$ (linear eigenvalue problem), then $K_{*} = 0$ and so
  \eqref{eq:KO_spd} is equivalent to the \emph{Aufbau} principle. This
  equivalence does not hold true in general for nonlinear models:
  \eqref{eq:KO_spd} is independent of the Aufbau principle, and $\eta$
  is unrelated to the gap
  $\nu = \min_{a \notin \mathcal{I}} \varepsilon_{a} - \max_{i \in \mathcal{I}}
  \varepsilon_{i}$ (equal to the lowest eigenvalue of the operator
  $\Op$). However in specific cases, such as the reduced Hartree-Fock
  or Gross-Pitaevskii model, where $K_{*} \ge 0$, we have
  $\eta \ge \nu$ and a positive gap is a sufficient (but not
  necessary) condition for optimality.
\end{remark}

\begin{remark}[Link with the Liouvillian]
 Another way to understand $\Op$ is to use the Liouvillian $\Lc_{H_*}$
  associated to $H_*$, which is defined by:
  \[
    \Forall A\in\Hc,\quad\Lc_{H_*}A \coloneqq [H_*,A].
  \]
  The action of $\Lc_{H_*}$ has a simple expression in the eigenvector decomposition
  $(\varepsilon_k, \phi_k)_{1\leqslant k \leqslant N_b}$ of $H_*$:
  \begin{equation}
    \label{eq:liouvil}
    \Forall 1 \lq k,l \lq N_b,\quad
    \Lc_{H_*}(\phi_k\phi_l^*) = (\varepsilon_k - \varepsilon_l)\phi_k\phi_l^*.
  \end{equation}
  Thus, we have
  \[  \Forall i \in \mathcal{I}, \ a \notin \mathcal{I}, \quad
   \Op(\phi_i\phi_a^* + \phi_a\phi_i^*)
    = (\varepsilon_a - \varepsilon_i)(\phi_i\phi_a^* + \phi_a\phi_i^*).
  \]
  Hence, one can easily check that,
  using again the decomposition \eqref{eq:XP},
  we have
  \begin{equation}
    \label{eq:Omega_Liouville}
    \Forall X\in\Tc_{P_*}\Mc_N,\quad\Op X = -[P_*,\Lc_{H_*} X] = -[P_*,[H_*,X]].
  \end{equation}
  This definition also provides a canonical way to extend the operator $\Op$, originally
  defined on $T_{P_*}\Mc_N$, to the whole space $\Hc$.
\end{remark}

\subsection{Fixed-point on a manifold}
\label{sec:cvg_mani}
Finally, we state a general abstract result that we will use to study the convergence
of optimization algorithms on manifolds.
\begin{lemma}
  \label{lem:jac}
  Let $\Mc$ be a smooth finite dimensional Riemannian
  manifold. Let $P_*\in\Mc$ and $f:U \to \Mc$ be a continuously
  differentiable mapping on a neighborhood $U$ of $P_*$ such that $f(P_*) = P_*$.
  Let $\text{\emph{d}} f(P_*): \Tc_{P_*}\Mc \to \Tc_{P_*}\Mc$ be the derivative
  of $f$ at $P_*$.
  If $\emph{d}f(P_*)$ verifies $r\prt{\text{\emph{d}} f(P_*)} < 1$ where
  $r\prt{\text{\emph{d}} f(P_*)}$ is the spectral radius of
  $\text{\emph{d}} f(P_*)$, then,
  for $P^0$ close enough to $P_*$, the fixed point iteration $P^{k+1} = f(P^k)$
  linearly converges to $P_*$ with asymptotic rate $r\prt{\text{\emph{d}} f(P_*)}$, in
  the sense that for all $\theta > 0$ there exists $C_\theta > 0$ such that, for all $P^0$ close enough to $P_*$,
  \[
    \norm{P^{k}-P_*} \leqslant C_\theta \prt{r(\text{\emph{d}} f(P_*))
      +\theta}^k \norm{P^{0}-P_*}.
  \]
\end{lemma}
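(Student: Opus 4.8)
The plan is to reduce the statement to the classical linear-stability result for fixed-point iterations in a Euclidean space, by working in a single coordinate chart around $P_*$. First I would pick a smooth local parametrization $\psi : V \to \Mc$, where $V$ is an open neighborhood of $0$ in $\Tc_{P_*}\Mc \cong \R^d$ (with $d = \dim \Mc$), satisfying $\psi(0) = P_*$ and $\d\psi(0) = \Id_{\Tc_{P_*}\Mc}$; the exponential map of the Riemannian structure, or any of the explicit retractions available on the Grassmann manifold, provides such a chart. Shrinking $V$ if necessary so that $\psi(V) \subset U$, I would transport the iteration to the chart by setting $g \coloneqq \psi^{-1} \circ f \circ \psi : V \to \R^d$. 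Then $g$ is $C^1$ near $0$, $g(0) = 0$, and by the chain rule $\d g(0) = \d\psi(0)^{-1} \circ \d f(P_*) \circ \d\psi(0) = \d f(P_*)$ (after the identification), so in particular $r(\d g(0)) = r(\d f(P_*)) =: \rho < 1$. The fixed-point iteration $P^{k+1} = f(P^k)$ corresponds, for iterates staying in $\psi(V)$, to $x^{k+1} = g(x^k)$ with $x^k \coloneqq \psi^{-1}(P^k)$.

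Next I would invoke the standard linear-algebra fact that for any $\theta > 0$ there is a norm $\norm{\cdot}_\theta$ on $\R^d$ for which the induced operator norm of $\d g(0)$ satisfies $\norm{\d g(0)}_\theta \le \rho + \theta/2$; this follows from the Jordan form (or from the fact that $\inf$ of operator norms over all norms equals the spectral radius). Fix such a norm with $\theta$ replaced by a small enough value. By continuity of $\d g$ and a first-order Taylor estimate, there is a ball $B_r \coloneqq \{x : \norm{x}_\theta \le r\} \subset V$ on which $\norm{g(x)}_\theta = \norm{g(x) - g(0)}_\theta \le (\rho + \theta)\norm{x}_\theta$. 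Since $\rho + \theta < 1$ (choosing $\theta$ small), $B_r$ is stable under $g$, and for any $x^0 \in B_r$ one gets $\norm{x^k}_\theta \le (\rho+\theta)^k \norm{x^0}_\theta$ by induction, hence $x^k \to 0$ linearly with rate $\rho + \theta$.

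Finally I would translate this estimate back to $\Mc$. Because $\psi$ is a diffeomorphism with $\d\psi(0) = \Id$, there are constants $0 < c \le C'$ such that $c\,\norm{x}_\theta \le \norm{\psi(x) - P_*} \le C' \norm{x}_\theta$ for $x \in B_r$ (after possibly shrinking $r$), where $\norm{\cdot}$ is the ambient norm in the statement; equivalence of all norms on $\R^d$ lets me pass between $\norm{\cdot}_\theta$ and the Euclidean norm. Combining, for $P^0$ close enough to $P_*$ (so that $x^0 \in B_r$, which also guarantees all iterates stay in $\psi(V)$ so the chart computation is valid), we obtain
\[
  \norm{P^k - P_*} \le C' \norm{x^k}_\theta \le C'(\rho+\theta)^k \norm{x^0}_\theta \le \frac{C'}{c}(\rho+\theta)^k \norm{P^0 - P_*},
\]
which is the claimed bound with $C_\theta = C'/c$. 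Since $\theta > 0$ was arbitrary (and the norm, $r$, and constants depend on it), this proves the lemma.

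The only genuinely delicate point is the interplay between the two roles of $\theta$: the $\theta$ appearing in the conclusion must be the one used to build the adapted norm, and one must make sure that the stability radius $r$ — hence the meaning of "close enough" — is allowed to depend on $\theta$, which the statement indeed permits. Everything else (existence of the chart, the adapted-norm lemma, the Taylor estimate, norm equivalence) is standard; no compactness or global structure of $\Mc$ is needed beyond it being a smooth manifold near $P_*$.
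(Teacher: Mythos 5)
Your proposal is correct and follows essentially the same route as the paper: transfer the iteration to a coordinate chart with $\psi(0) = P_*$, $\d\psi(0) = \Id$, use the adapted-norm characterization of the spectral radius to obtain a contraction estimate near $0$, and pull the estimate back via norm equivalence. The paper phrases the middle step as an application of the Banach fixed-point theorem after constructing the $\theta$-norm, whereas you spell out the Taylor estimate and induction explicitly, but the content is identical.
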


\begin{proof}
  We use the
  notation presented in \cite[Chapter 1]{brownTopologyDifferentiableViewpoint1967}.
  Up to a restriction of $U$ to a smaller neighborhood of $P_*$,
  there exists a neighborhood $V$ of $0$ in $\Tc_{P_*}\Mc$ and $g: V \to U$ a
  diffeomorphic parametrization of the manifold such that $g(0) = P_*$ and
  $\d g(0) = \Id$ (take for instance the restriction to $V$ of the exponential map).
  Therefore, as $f$ is continuously differentiable, there exists a
  neighborhood $\widetilde V \subset V$ of~$0$ in $\Tc_{P_*}\Mc$ such that
  $F \coloneqq g^{-1} \circ f \circ g : \widetilde V \to V$ is a continuously differentiable
  map with fixed-point $0$ and $\d F(0) = \d f(P_*)$.
  As $r\prt{\d F(0)} = r\prt{\d f(P_*)} < 1$, we can find a neighborhood
  $V' \subset V$ of~$0$ in $T_{P_*}\Mc$ such that $F$ is a contraction in $V'$
  for some norm $\norm{\cdot}_\theta$, with contraction factor
  $r(\text{d} f(P_*)) +\theta$, $\theta > 0$ (see \cite{holmesFormulaSpectralRadius1968b} for more details).
  Therefore, we can apply the Banach fixed-point theorem to $F$  and we get
  that, for $x^0$ close enough to $0$, $x^{k+1} = F(x^k)$ converges to 0. Finally,
  for $P^0 = g(x^0)$,
  $P^{k+1} = g(x^{k+1}) = g(F(x^k)) = f(g(x^k)) = f(P^k)$ converges to
  $P_* = g(0)$, with asymptotic rate $r(\d f(P_*))$.
\end{proof}

\section{Algorithms and analysis of convergence}
\label{sec:algo}
\subsection{Direct minimization}
\label{sec:grad}
The gradient descent algorithm consists in following the steepest
descent direction with a fixed step $\beta$
at each iteration point.  As the iterations are constrained to stay on
the manifold, we have to
\begin{enumerate}
  \item project the gradient on the tangent space with $\Pi_{P^k}$
    to bring the steepest descent line $P^k~-~\beta\Pi_{P^k}\prt{\nabla E(P^k)}$
    back to the manifold at first order;
  \item retract the steepest descent line defined in the tangent space onto
    the manifold $\Mc_N$ by a nonlinear retraction $R$ mapping a neighborhood of $\Mc_N$ in $\Hc$ to $\Mc_N$.
\end{enumerate}
An example of retraction is given in Section~\ref{sec:retrac} and
we will assume that the retraction $R$ satisfies
\begin{asmp}
  \label{asmp3}
  $R:\Hc \to \Hc$ is of class $C^2$ and for all $P\in\Mc_N$ and $X\in\Hc$ small enough,
  \begin{equation*}
    R(P + X) \in\Mc_N \text{ and }
    R(P + X) = P + \Pi_P(X) + O(X^2).
  \end{equation*}
\end{asmp}
These two successive operations are sketched in Figure~\ref{fig:projo} and
the gradient descent algorithm is presented in Algorithm~\ref{algo:grad}.
\begin{figure}[h!]
  \centering
  \includegraphics[scale = 1]{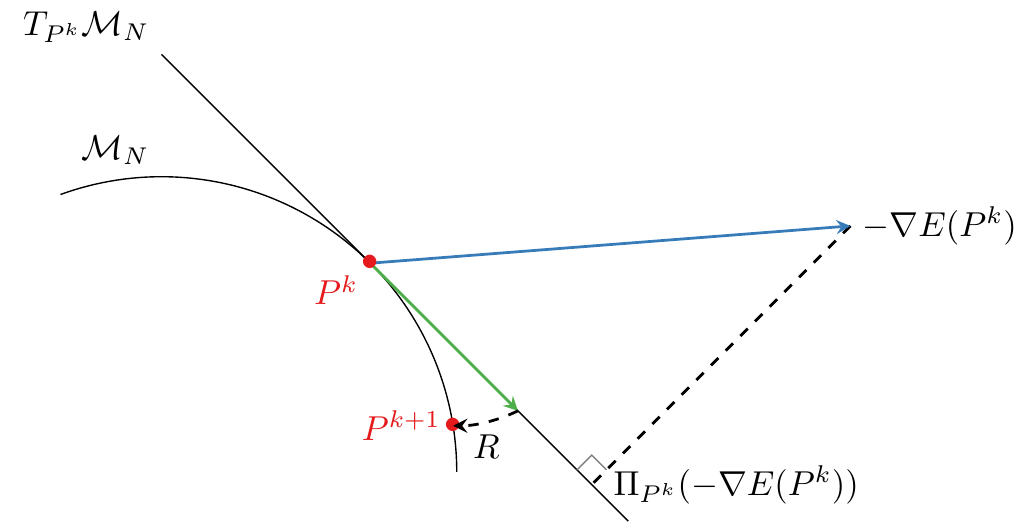}
  \caption[Retraction and projection for the gradient algorithm]{Projection on
    the tangent space for the gradient descent, and retraction to the
    manifold.}
  \label{fig:projo}
\end{figure}

\begin{algorithm}[H]
  \SetAlgoSkip{bigskip}
  \KwData{$P^0 \in \Mc_N$}
  \While{convergence not reached}{
    $P^{k+1} \coloneqq R\prt{P^k - \beta\Pi_{P^k}\prt{\nabla E(P^k)}}$ \;
  }
  \caption[Gradient descent]{Gradient descent}
  \label{algo:grad}
\end{algorithm}

At the continuous level, this algorithm can be seen as the
discretization of the flow $\dot{P} = - \Pi_P\nabla E(P)$. Note that,
by the use of the retraction $R$ and Assumption~\ref{asmp3}, the projection
step has no influence on the convergence of the algorithm for $\beta$
small. Indeed, by Assumption~\ref{asmp3},
\[
  \begin{split}
    \Forall P\in \Mc_N,\quad R\prt{P-\beta \Pi_P\prt{\nabla E(P)}} &=
    P -\beta\Pi_P\prt{\Pi_P\prt{\nabla E(P)}} + O(\beta^2) \\ &=
    P -\beta\Pi_P\prt{\nabla E(P)} + O(\beta^2)
  \end{split}
\]
and thus the first order expansion is the same with or without the
projection step. The reason we use this projection step is that it is
convenient to interpret $\Pi_{P^{k}} \nabla E(P^{k})$ as a residual.

The following theorems state that, for $\beta$ small enough,
Algorithm~\ref{algo:grad} globally converges in the sense that
$\Pi_{P^k}\nabla E(P^k) \to 0$ and locally converges in the sense that
$P^k \to P_*$ if $P^0$ is close enough to $P_*$.
\begin{theorem}
  \label{thm:grad_glob}
  Let $E:\Hc\to\R$ satisfy Assumption~\ref{asmp1}
  and $R:\Hc \to \Hc$ satisfy Assumption~\ref{asmp3}. There exists $\beta_0 > 0$ such that for all $0 < \beta \le \beta_0$ and all
  $P^0\in\Mc_N$, the iterations
  \[
    P^{k+1}\coloneqq R\prt{P^k - \beta \Pi_{P^k}\prt{\nabla E(P^k)}}
  \]
  satisfy the following properties:
  \begin{enumerate}
    \item $(E(P^k))_{k \in \N}$ is a nonincreasing sequence converging to some
      critical value $E_{\rm c}$ of $E$ on $\Mc_N$;
    \item when $k$ goes to infinity, $\Pi_{P^k}\nabla E(P^k) \to 0$,
  $\|P^{k+1}-P^k\|_{\rm F} \to 0$ and $d(P^k,A_{\rm c}) \to 0$ where
  $A_{\rm c}$ is one of the connected components of
  $C(E_{\rm c})\coloneqq\{ P \in \Mc_N \, | \, E(P)=E_{\rm c} \; \text{and} \;  \Pi_{P}\prt{\nabla E(P)}=0\}$
  .
\end{enumerate}
\end{theorem}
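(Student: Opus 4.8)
The plan is to combine a descent-lemma argument with a compactness/accumulation-point analysis of the Łojasiewicz–Lyapunov type, but carried out entirely at the level of the Euclidean ambient space $\Hc$ with the retraction bookkeeping made explicit. First I would establish the key one-step decrease estimate: since $E$ is $C^2$ on the compact set $\Mc_N$, both $\nabla E$ and $\nabla^2 E$ are bounded there, and by Assumption~\ref{asmp3} the retraction satisfies $R(P+X)=P+\Pi_P(X)+O(X^2)$ with a constant uniform on $\Mc_N$; writing $G^k\coloneqq\Pi_{P^k}(\nabla E(P^k))$, a Taylor expansion of $E\circ R$ around $P^k$ gives
\[
  E(P^{k+1}) = E(P^k) - \beta\croF{\nabla E(P^k),G^k} + O(\beta^2\normF{G^k}^2)
             = E(P^k) - \beta\normF{G^k}^2 + O(\beta^2\normF{G^k}^2),
\]
where I used $\croF{\nabla E(P^k),G^k}=\croF{\Pi_{P^k}\nabla E(P^k),G^k}=\normF{G^k}^2$ because $\Pi_{P^k}$ is an orthogonal projector. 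Hence there is $\beta_0>0$ (depending only on the uniform bounds, \emph{not} on the iterate) such that for $0<\beta\le\beta_0$,
\[
  E(P^{k+1}) \le E(P^k) - \tfrac{\beta}{2}\normF{G^k}^2 .
\]
This immediately yields claim (1) in the weak sense: $(E(P^k))_k$ is nonincreasing, bounded below on the compact $\Mc_N$, hence converges to some value $E_{\rm c}$.

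Next I would extract the consequences for the residual. Summing the decrease inequality telescopes to $\tfrac{\beta}{2}\sum_{k\ge0}\normF{G^k}^2 \le E(P^0)-E_{\rm c}<\infty$, so $\normF{G^k}\to0$, i.e.\ $\Pi_{P^k}\nabla E(P^k)\to0$. From the update formula and Assumption~\ref{asmp3}, $\normF{P^{k+1}-P^k}=\normF{R(P^k-\beta G^k)-P^k}=\beta\normF{G^k}+O(\beta^2\normF{G^k})\to0$. For the last part of claim (2) and the identification of $E_{\rm c}$ as a critical value, I would argue by accumulation points: $\Mc_N$ is compact, so the sequence $(P^k)$ has accumulation points; if $P^{k_j}\to \bar P$ then continuity of $\nabla E$ and of $P\mapsto\Pi_P$ gives $\Pi_{\bar P}\nabla E(\bar P)=\lim_j G^{k_j}=0$, so $\bar P$ is a critical point, and by continuity $E(\bar P)=E_{\rm c}$; thus every accumulation point lies in $C(E_{\rm c})$, and $E_{\rm c}$ is a critical value. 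That $d(P^k,A_{\rm c})\to 0$ for a \emph{single} connected component $A_{\rm c}$ of $C(E_{\rm c})$ then follows from the standard fact that, since $\normF{P^{k+1}-P^k}\to0$, the set of accumulation points of $(P^k)$ is connected (a bounded sequence with vanishing successive differences cannot have its limit set split into two components separated by a positive distance); being a connected subset of $C(E_{\rm c})$, it is contained in one of the connected components, which we call $A_{\rm c}$, and $d(P^k,A_{\rm c})\to0$ because the accumulation-point set of a bounded sequence is exactly what $d(P^k,\cdot)$ converges to.

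The main obstacle I anticipate is making the constant $\beta_0$ genuinely uniform: the $O(\beta^2)$ remainder in the expansion of $E\circ R$ hides a constant depending on second derivatives of $E$ and of $R$ along the segment $P^k-t\beta G^k$, $t\in[0,1]$, and one must check this segment stays in a fixed compact neighborhood of $\Mc_N$ on which these derivatives are bounded — this is where compactness of $\Mc_N$ together with the $C^2$ hypotheses on $E$ and $R$ is used, and it requires noting that $\normF{G^k}$ is itself bounded uniformly (again by compactness) so that the segment has length $O(\beta)$. The second, milder, delicate point is the topological lemma that vanishing successive differences force a connected limit set; I would either cite it or give the short contradiction argument (if the limit set met two components $A_1,A_2$ at distance $2\delta>0$, infinitely many iterates lie within $\delta$ of each, and between consecutive such visits some iterate lies at distance $\ge\delta$ from both, contradicting compactness of the limit set once one passes to a further subsequence). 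Everything else is routine Taylor expansion and telescoping.
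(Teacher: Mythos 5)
Your proposal is correct and follows essentially the same route as the paper's proof: a uniform descent inequality from the $C^{2}$ regularity of $E$, $R$ and the compactness of $\Mc_N$, telescoping to get $\sum_k\normF{\Pi_{P^k}\nabla E(P^k)}^2<\infty$, hence vanishing residuals and successive differences, and then the standard accumulation-point and connectedness argument (with the same contradiction showing the limit set cannot be split into two components at positive distance once $\normF{P^{k+1}-P^k}\to 0$). Your extra remark on why the $O(\beta^2)$ constant is uniform along segments of length $O(\beta)$ is exactly the detail the paper handles implicitly via compactness.
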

\begin{proof}
  As $E:\Hc \to \R$ and $R:\Hc \to \Hc$ are $C^2$, and $\Mc_N$ is compact, we can use the expansion of Assumption~\ref{asmp3} and obtain that there exists a constant $C \in \R_+$ such that for all $0 \le \beta \le 1$,
  \begin{align*}
\Forall k \in \N, \quad    E(P^{k+1})  \le  E(P^k) - \beta\normF{\Pi_{P^k}\nabla E(P^k)}^2 + C \beta^2 \normF{\Pi_{P^k}\nabla E(P^k)}^2
  \end{align*}
  Therefore, we have for $\beta > 0$ small enough,
  \[
   \Forall k \in \N, \quad    E(P^{k+1}) \leqslant E(P^k) -\frac{\beta}{2}\normF{\Pi_{P^k}\nabla E(P^k)}^2.
  \]
  This shows that the sequence $(E(P^k))_{k\in\N}$ is nonincreasing.  As $E$
  is continuous on the compact set $\Mc_N$,
  $(E(P^k))_{k\in\N}$ is bounded and hence converges to some $E_{\rm c} \in \R$.
  Moreover,
  $$
  \sum_{k \in \N} \normF{\Pi_{P^k}\nabla E(P^k)}^2 < \infty,
  $$
  which implies that $\Pi_{P^k}\nabla E(P^k) \to 0$ when
  $k \to \infty$. It follows that $\|P^{k+1}-P^k\|_{\rm F} \to 0$ when
  $k \to \infty$.

  Let $B$ be the non-empty compact set of accumulation points of
  $(P^{k})_{k \in \N}$. By continuity of $E$ and
  $P \mapsto \Pi_{P}\nabla E(P)$, it follows that $B \subset C(E_{\rm c})$. Assuming that
  $d(P^{k}, B)$ does not go to zero, we can extract a subsequence at
  finite distance of $B$ which converges to a point in $B$, a
  contradiction. Assume that $B$ is disconnected: it is then the union of
  two compact subsets $B_{1}$ and $B_{2}$ at positive distance from each other.
  Since $P^{k+1} - P^{k} \to 0$, there is an infinite number of points
  in $(P^{k})_{k \in \N}$ at distance greater or equal to $\eta > 0$ from both $B_{1}$ and
  $B_{2}$, from which we can extract a point in $B$, a contradiction.
  It follows that $B$ is connected, hence the result.
\end{proof}
This result implies in particular the convergence of the sequence
$(P^{k})_{k \in \N}$ in the generic case where critical points are
isolated. If this is not the case but $E$ and $R$ are analytic,
convergence can be shown following the approach
in~\cite{levittConvergenceGradientbasedAlgorithms2012b} based on \L
ojasiewicz inequality.

\begin{theorem}
  \label{thm:grad}
  Let $E: \Hc \to \R$ satisfy Assumption~\ref{asmp1} and Assumption~\ref{asmp2} with $P_*$
  a local minimizer of \eqref{mini_mani}.
  Let $R:\Hc \to \Hc$ satisfy Assumption~\ref{asmp3}.
  Then, if $P^0\in\Mc_N$ is close enough to $P_*$, the iterations
  \[
    P^{k+1} \coloneqq R\prt{P^k - \beta \Pi_{P^k}\prt{\nabla E(P^k)}}
  \]
  linearly converge to $P_*$ for $\beta>0$ small enough, with asymptotic rate
  $r(1 - \beta J_\text{\emph{grad}})$ where $J_\text{\emph{grad}} \coloneqq \Op + K_*$.
\end{theorem}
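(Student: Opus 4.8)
The plan is to apply the abstract fixed-point result, Lemma~\ref{lem:jac}, to the iteration map
\[
  f(P) \coloneqq R\bigl(P - \beta\,\Pi_P(\nabla E(P))\bigr).
\]
First I would check the hypotheses of the lemma: by Assumption~\ref{asmp1} and Assumption~\ref{asmp3}, $f$ is $C^1$ on a neighborhood of $P_*$ and maps into $\Mc_N$; and since $P_*$ is a critical point, $\Pi_{P_*}(\nabla E(P_*)) = \Pi_{P_*}(H_*) = 0$ by the first-order condition \eqref{eq:fo-opt}, so $f(P_*) = R(P_*) = P_*$. It then remains to compute $\mathrm{d}f(P_*)$ on $T_{P_*}\Mc_N$ and show that $r(\mathrm{d}f(P_*)) < 1$ for $\beta$ small enough, and that this spectral radius equals $r(1 - \beta(\Op + K_*))$ to leading order (the statement is about the asymptotic rate, which Lemma~\ref{lem:jac} delivers up to an arbitrarily small $\theta$, so matching $\mathrm{d}f(P_*)$ exactly with $1 - \beta J_{\text{grad}}$ is what is needed).

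The core computation is the differential. Write $\Phi(P) \coloneqq P - \beta\,\Pi_P(\nabla E(P))$, so $f = R \circ \Phi$. For $X \in T_{P_*}\Mc_N$, I would expand $\Phi(P_* + tX)$ to first order in $t$. The term $P$ contributes $X$. For the projection term, I use $\Pi_{P_* + tX} = \Pi_{P_*} + t\,(\mathrm{d}\Pi)_{P_*}(X) + O(t^2)$ and $\nabla E(P_* + tX) = H_* + t\,\nabla^2 E(P_*) X + O(t^2)$; crucially the cross term involving $(\mathrm{d}\Pi)_{P_*}(X)$ hits $H_*$, and since $\Pi_{P_*}(H_*) = 0$ one should check that $(\mathrm{d}\Pi)_{P_*}(X)$ applied to $H_*$, after the outer $\Pi_{P_*}$ coming from the retraction, reproduces exactly the curvature operator $\Op X$ — this is the content of the second-order computation already carried out in Section~\ref{sec:second-order}, and I would quote that identity (equation \eqref{eq:O}, or equivalently \eqref{eq:Omega_Liouville}: $\Op X = -[P_*,[H_*,X]]$). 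The term with $\Pi_{P_*} \nabla^2 E(P_*) X$ gives $\Pi_{P_*}\nabla^2 E(P_*)\Pi_{P_*} X = K_* X$ (using $\Pi_{P_*}X = X$ for $X$ in the tangent space). Then Assumption~\ref{asmp3} gives $\mathrm{d}R(P_*) = \Pi_{P_*}$ on the relevant directions (since $R(P_* + Y) = P_* + \Pi_{P_*}(Y) + O(Y^2)$), and composing yields
\[
  \mathrm{d}f(P_*)\,X = X - \beta(\Op + K_*)X = (1 - \beta J_{\text{grad}})X.
\]

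Finally I would argue $r(1 - \beta J_{\text{grad}}) < 1$ for $\beta$ small. Here $J_{\text{grad}} = \Op + K_*$ is a self-adjoint operator on $T_{P_*}\Mc_N$ (for the Frobenius inner product), and by the second-order optimality condition \eqref{eq:KO_spd} it satisfies $\croF{X, J_{\text{grad}} X} \geqslant \eta \normF{X}^2 > 0$, so its spectrum lies in $[\eta, \Lambda]$ with $\Lambda = \norm{J_{\text{grad}}}$. Hence the spectrum of $1 - \beta J_{\text{grad}}$ lies in $[1 - \beta\Lambda, 1 - \beta\eta]$, which is contained in $(-1,1)$ as soon as $0 < \beta < 2/\Lambda$; for such $\beta$, $r(1 - \beta J_{\text{grad}}) = \max(|1 - \beta\eta|, |1 - \beta\Lambda|) < 1$. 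Lemma~\ref{lem:jac} then gives local linear convergence of $P^k$ to $P_*$ with asymptotic rate $r(1 - \beta J_{\text{grad}})$, which is the claim. The main obstacle is the differential computation, specifically verifying that the derivative of the projector $\Pi_P$ acting against the nonzero matrix $H_*$ produces precisely the operator $\Op$ and not some spurious extra term; this is where one must be careful about the order in which $\Pi_{P^k}$ (from the algorithm) and $\mathrm{d}R(P_*) = \Pi_{P_*}$ (from the retraction) are applied, but the bookkeeping is essentially the same as in the derivation of the Hessian in Section~\ref{sec:second-order}, so it can be imported rather than redone from scratch.
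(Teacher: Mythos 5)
Your proposal is correct and takes essentially the same approach as the paper's proof: apply Lemma~\ref{lem:jac} to $f = R\circ\Phi$ with $\Phi(P) = P - \beta\,\Pi_P(\nabla E(P))$, compute $\d f(P_*) = 1 - \beta(\Op + K_*)$ on $\Tc_{P_*}\Mc_N$, and invoke the second-order optimality condition \eqref{eq:KO_spd} to get $r(\d f(P_*)) < 1$ for $\beta$ small. One small caveat: the operator identity $(\d\Pi)_{P_*}(X)(H_*) = \Op X$ does require a short direct check rather than being ``imported'' from Section~\ref{sec:second-order}, which establishes only the quadratic-form identity $\croF{H_*,\gpp(0)} = \croF{X,\Op X}$; the paper performs this check inline by expanding $\Pi_{\gamma(t)}H(\gamma(t)) = \gamma(t)H(\gamma(t))(1-\gamma(t)) + \hc$ to first order in $t$, and the computation does confirm your claim.
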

\begin{proof}
  In order to prove convergence, one can apply Lemma~\ref{lem:jac}
  to the function $f: \Mc_N\to\Mc_N$ defined by
  \[
    f(P) \coloneqq R(P - \beta \Pi_P\prt{\nabla E(P)}),
    \label{eq:f}
  \]
  for which we know by the first order optimality condition
  that $P_*$ is a fixed-point.

  We compute explicitly $\d f(P_*)$ using the
  second-order optimality condition \eqref{eq:KO_spd}. To this end, take
  $X\in\Tc_{P_*}\Mc_N$ and a smooth path
  $\gamma : I\to\Mc_N$ defined on a real interval $I$ containing 0
  such that $\gamma(0) = P_*$ and $\dot\gamma(0) = X$.
  We want to expand to the first order in $t$ the following expression:
  \[
    f(\gamma(t)) = R\prt{\gamma(t) - \beta \Pi_{\gamma(t)}
      \prt{\nabla E(\gamma(t))}}.
  \]
  First, we focus on the projection of $H(\gamma(t))$
  on $T_{\gamma(t)}\Mc_N$:
  \[
    \begin{split}
      \Pi_{\gamma(t)}H(\gamma(t)) &= \gamma(t)H(\gamma(t))(1-\gamma(t)) +
      \hc \\
      &= (P_* + tX)\prt{H_* + t\prt{\nabla^2E(P_*) X}}(1 - P_* - tX) + \hc +
      O(t^2) \\
      &= t\big[ P_*\prt{\nabla^2E(P_*) X}(1-P_*) + \hc \big]
    + t\big[ XH_*(1-P_*) - P_*H_*X + \hc \big] +
      O(t^2) \\
      &= t(K_* + \Op)X + O(t^2).
    \end{split}
  \]
  Inserting this into the expansion of $f(\gamma(t))$, using Assumption~\ref{asmp3}
  and the fact that $\Pi_{\gamma(t)}X~=~\Pi_{P_*}X~+~O(t^2)$, gives
  \[
    f(\gamma(t)) = R\big(\gamma(t) - \beta t
    (\Op + K_*) X + O(t^2)\big) = P_* + t\big(X - \beta (\Op + K_*) X\big) + O(t^2).
  \]
  Therefore,
  \[
    \d f(P_*) X = \big(1 - \beta (\Op + K_*)\big) X.
  \]
  As the second-order optimality condition \eqref{eq:KO_spd}
  shows that $\Op + K_*$ is positive definite
  on $\Tc_{P_*}\Mc_N$, for $\beta$ small enough, the spectral radius $r(\d f(P_*))$
  of the derivative $\d f(P_*)$, is less than 1, which concludes the proof.
\end{proof}

\subsection{Damped self-consistent field}
\label{sec:scf}
The damped SCF algorithm is a damped version of the Roothaan algorithm
\cite{cancesConvergenceSCFAlgorithms2000c,levittConvergenceGradientbasedAlgorithms2012b}
and is presented in Algorithm~\ref{algo:scf}, under the assumption that the strong
\emph{Aufbau} principle is satisfied, and represented in Figure~\ref{fig:scf}.
Note that it is well defined only if
$\varepsilon_N^k < \varepsilon_{N+1}^k$ for all $k\in\N$. We introduce the
nonlinear operators:
\begin{enumerate}
  \item $A(H) \coloneqq \mathbf{1}_{(-\infty,\varepsilon_N(H)]}(H)$, with
    $\varepsilon_N(H)$ the lowest $N^{\rm th}$ eigenvalue of $H$ and where we
    recall that $\displaystyle{
      \mathbf{1}_{(-\infty,\mu]}(H) \coloneqq
      \sum_{\varepsilon_i \leq \mu} \phi_i\phi_i^*
    }$, the $\phi_{i}$'s being orthonormal eigenvectors of $H$
    associated to the eigenvalues $\varepsilon_i$;
  \item $\auf(P) = A(H(P))$ or, equivalently,
    $\displaystyle{\auf(P) \coloneqq \sum_{i=1}^N \phi_i \phi_i^*}$ where the
    $\phi_i$'s are orthonormal eigenvectors associated to the lowest $N$
    eigenvalues of $H(P)$.
\end{enumerate}

\begin{algorithm}[H]
  \SetAlgoSkip{bigskip}
  \KwData{$P^0 \in \Mc_N$}
  \While{convergence not reached}{
    solve $\displaystyle{\begin{cases}
        H(P^k)\phi_i^k = \varepsilon_i^k \phi_i^k,\ \varepsilon_1^k \leqslant \cdots \leqslant
        \varepsilon_N^k < \varepsilon_{N+1}^k
        \leq \cdots \leq \varepsilon^k_{N_b}\\
        \cro{\phi_i^k, \phi_j^k} = \delta_{ij}, \\
      \end{cases}}$\;
    $\auf(P^k) \coloneqq \displaystyle{\sum_{i=1}^N \phi^{k}_i\prt{\phi^{k}_i}^*}$\;
    $P^{k+1} \coloneqq R\prt{P^k + \beta\Pi_{P^k}\prt{\auf(P^k) - P^k}}$\;
  }
  \caption[Damped SCF algorithm]{Damped SCF algorithm}
  \label{algo:scf}
\end{algorithm}

\begin{figure}[h!]
  \centering
  \includegraphics[scale = 1]{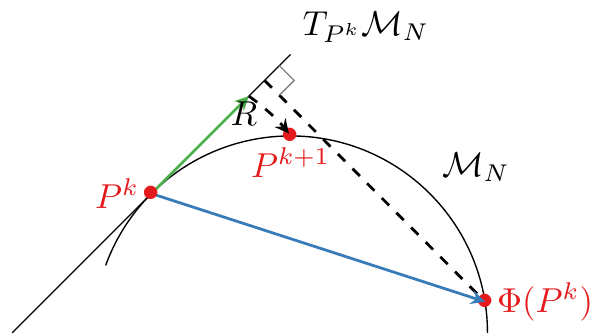}
  \caption{Retraction for the damped SCF algorithm.}
  \label{fig:scf}
\end{figure}

The following theorem states that, under the condition that there
is a gap between the smallest $N^{\rm th}$ and $(N+1)^{\rm st}$ eigenvalues of the
Hamiltonian $H_*$, Algorithm~\ref{algo:scf} locally converges for $\beta$ small
enough.
\begin{theorem}
  \label{thm:scf}
  Let $E: \Hc \to \R$ and $P_* \in \Mc_N$ satisfy Assumption~\ref{asmp1} and Assumption~\ref{asmp2} and $R~:~\Hc~\to~\Hc$ satisfy Assumption~\ref{asmp3}.  Assume that $P_*$ satisfies the strong {\em Aufbau} principle
  \[
    \auf(P_*) = P_* \text{ and }
    \nu \coloneqq \varepsilon_{N+1} - \varepsilon_N > 0,
  \]
  where $(\varepsilon_i)_{1\leq i \leq N_b}$ are the eigenvalues of $H_*$
  ranked in nondecreasing order.

  Then, for $\beta>0$ small enough and $P^0\in\Mc_N$ close enough to $P_*$, the iterations
  \[
    \label{eq:scf_ite_FD}
    P^{k+1} \coloneqq R\prt{P^k + \beta \Pi_{P^k}\prt{\auf(P^k) - P^k}}
  \]
  are well-defined and $P^k$ linearly converges to $P_*$, with asymptotic rate
  $r(1 - \beta J_\text{\emph{SCF}})$ where $J_\text{\emph{SCF}} \coloneqq~1+~\Op^{-1}K_*$.
\end{theorem}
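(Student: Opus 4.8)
The plan is to mirror the structure of the proof of Theorem~\ref{thm:grad}: express one step of the damped SCF iteration as a fixed-point map $f:\Mc_N\to\Mc_N$, verify that $P_*$ is a fixed point using the strong \emph{Aufbau} assumption, compute the differential $\d f(P_*)$ on $\Tc_{P_*}\Mc_N$, and then invoke Lemma~\ref{lem:jac} once we show that $r(\d f(P_*))<1$ for $\beta$ small. Concretely, set $f(P) \coloneqq R\big(P + \beta\,\Pi_P(\auf(P)-P)\big)$. Since $\auf(P_*) = P_*$ by the strong \emph{Aufbau} principle, we have $f(P_*)=R(P_*)=P_*$, so $P_*$ is a fixed point. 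The first preliminary point is that $f$ is well-defined and $C^1$ in a neighbourhood of $P_*$: because $\nu = \varepsilon_{N+1}-\varepsilon_N > 0$ and $H$ is continuous (Assumption~\ref{asmp1}), the spectral gap persists for $P$ near $P_*$, so $\varepsilon_N^k < \varepsilon_{N+1}^k$ holds along the iteration and $\auf = A\circ H$ is smooth near $P_*$ by standard analytic perturbation theory for spectral projectors (the Riesz formula $\auf(P) = \frac{1}{2\pi\i}\oint_{\Gamma}(z-H(P))^{-1}\,\d z$ with a fixed contour $\Gamma$ enclosing $\varepsilon_1,\dots,\varepsilon_N$). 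This also gives well-definedness of the iterates for $P^0$ close to $P_*$.

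The heart of the proof is the computation of $\d\auf(P_*)$, equivalently of $\d f(P_*)$. Take $X\in\Tc_{P_*}\Mc_N$ and a smooth path $\gamma$ with $\gamma(0)=P_*$, $\dot\gamma(0)=X$. Differentiating the Riesz formula at $t=0$ gives $\frac{\d}{\d t}\big|_{0}\auf(\gamma(t)) = \frac{1}{2\pi\i}\oint_{\Gamma}(z-H_*)^{-1}\big(\nabla^2E(P_*)X\big)(z-H_*)^{-1}\,\d z$. Decomposing in the eigenbasis of $H_*$ (using that $[H_*,P_*]=0$), a residue computation shows that this derivative, projected appropriately, acts on the occupied–virtual blocks exactly as $\Op^{-1}K_*$ does: writing $(\nabla^2E(P_*)X)_{ia}$ in the $\phi_k\phi_l^*$ basis, the contour integral produces the factor $(\varepsilon_a-\varepsilon_i)^{-1}$ in front of the $ia$-component, which by Remark~\ref{rmk:O} is precisely the action of $\Op^{-1}$ composed with $K_*$. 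Here one uses that $\Op$ is invertible on $\Tc_{P_*}\Mc_N$ exactly because $\nu>0$ (its smallest eigenvalue is $\nu$, by Remark~\ref{rmk:O}). Hence $\d\auf(P_*)X = \Pi_{P_*}\big(\auf(\gamma(t))\big)' = \Op^{-1}K_* X + o(1)$, and therefore, combining with the retraction expansion (Assumption~\ref{asmp3}) and $\Pi_{\gamma(t)}=\Pi_{P_*}+O(t)$ as in the proof of Theorem~\ref{thm:grad},
\[
  f(\gamma(t)) = R\big(\gamma(t) + \beta t\,(\Op^{-1}K_* - 1)X + O(t^2)\big) = P_* + t\big(X - \beta(1+\Op^{-1}K_*)X\big) + O(t^2),
\]
so that $\d f(P_*) = 1 - \beta J_\text{SCF}$ with $J_\text{SCF} = 1 + \Op^{-1}K_*$ on $\Tc_{P_*}\Mc_N$.

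It remains to show $r(1-\beta J_\text{SCF})<1$ for $\beta>0$ small. This is where the main subtlety lies: $J_\text{SCF} = 1 + \Op^{-1}K_*$ is \emph{not} symmetric for the Frobenius inner product (it is a product of the positive operator $\Op^{-1}$ and $\Op+K_*$), so one cannot argue via real eigenvalues directly. The fix is to note that $J_\text{SCF} = \Op^{-1}(\Op+K_*)$ is self-adjoint and positive for the modified inner product $\croF{\Op\,\cdot\,,\cdot}$ on $\Tc_{P_*}\Mc_N$ — indeed $\Op>0$ makes this a genuine inner product, and $\croF{\Op J_\text{SCF}X,Y} = \croF{(\Op+K_*)X,Y}$ is symmetric, with $\croF{\Op J_\text{SCF}X,X} = \croF{(\Op+K_*)X,X}\ge \eta\normF{X}^2>0$ by the second-order optimality condition~\eqref{eq:KO_spd}. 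Thus $J_\text{SCF}$ has a real, strictly positive spectrum, bounded above by some $\lambda_{\max}>0$, and for $0<\beta<2/\lambda_{\max}$ all eigenvalues of $1-\beta J_\text{SCF}$ lie strictly inside $(-1,1)$, giving $r(1-\beta J_\text{SCF})<1$. Lemma~\ref{lem:jac} then yields local linear convergence with the stated asymptotic rate. The main obstacle is genuinely this nonsymmetry of $J_\text{SCF}$ together with getting the differential of the spectral projector $\auf$ correct; once the contour-integral computation is pinned down and the right inner product is identified, the rest follows the template of Theorem~\ref{thm:grad}.
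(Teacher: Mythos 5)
Your proposal follows exactly the paper's strategy: set up the fixed-point map $f(P)=R(P+\beta\Pi_P(\auf(P)-P))$, use the Riesz/Cauchy contour formula for $A(H)$ to compute $\d\auf(P_*)$ via a residue calculation, identify $\d f(P_*)=1-\beta J_{\text{SCF}}$, argue that $J_{\text{SCF}}$ has real positive spectrum by passing to the $\Op$-weighted inner product (equivalently, the paper's similarity transform by $\Op^{-1/2}$), and close with Lemma~\ref{lem:jac}. The emphasis on the nonsymmetry of $J_{\text{SCF}}$ and the need to conjugate/reweight is exactly the right pressure point, and the remark that $\Op$ is invertible with smallest eigenvalue $\nu$ under strong \emph{Aufbau} is correct.

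One slip worth fixing: your intermediate sign is wrong. The residue of $(z-\varepsilon_i)^{-1}(z-\varepsilon_a)^{-1}$ over a contour enclosing only $\varepsilon_i$ (occupied) is $(\varepsilon_i-\varepsilon_a)^{-1}=-(\varepsilon_a-\varepsilon_i)^{-1}$, so $\d A(H_*)h=-\Op^{-1}\Pi_{P_*}h$ and hence $\d\auf(P_*)X=-\Op^{-1}K_*X$, not $+\Op^{-1}K_*X$ as you write. You then compensate with a second sign error in the displayed expansion of $f(\gamma(t))$: if $\d\auf(P_*)X$ were $+\Op^{-1}K_*X$, the bracket would read $X-\beta(1-\Op^{-1}K_*)X$, yet you state $X-\beta(1+\Op^{-1}K_*)X$. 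The two errors cancel and the stated Jacobian $1-\beta(1+\Op^{-1}K_*)$ is ultimately correct, but as written the chain of equalities does not hold. Correct the residue sign and the intermediate display to $\auf(\gamma(t))-\gamma(t)=-t(1+\Op^{-1}K_*)X+O(t^2)$ and everything follows cleanly, matching the paper.
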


\begin{proof}
  In order to prove convergence, we apply Lemma~\ref{lem:jac}
  to the function $f : \Mc_N \to \Mc_N$ defined by
  \[
    f(P) \coloneqq R(P + \beta \Pi_P\prt{\auf(P)-P}),
  \label{eq:f_scf}
  \]
  for which $P_*$ is a fixed-point.

  First, we compute the derivative of $\auf = A\circ H$ at the minimizer $P_*$
  to get
  \[
    \d \auf(P_*) = \d {A}(H_*)\nabla^2 E(P_*).
  \]

  Now, to compute $\d {A}(H_*)$, note that, as there is a gap
  $\varepsilon_{N+1} > \varepsilon_N$ at the minimum,
  we can find a contour $\Cc$  in the complex plane enclosing the lowest $N$
  eigenvalues of $H_*$ (Figure~\ref{fig:aufbau}) such that
  \begin{equation}
    \label{eq:At}
    A(H_*) = \frac{1}{2\pi\i}\oint_\Cc \frac{1}{z-H_*}\ \d z
  \end{equation}
  (see \cite{highamFunctionsMatrices2008,katoPerturbationTheoryLinear1995}
  for more details on spectral calculus, contour integrals and
  perturbation theory for functions of matrices).
  By continuity, we also have
  \[
    A(H) = \frac{1}{2\pi\i}\oint_\Cc \frac{1}{z-H}\ \d z
  \]
  for $H$ in a neighborhood of $H_*$.
  \begin{figure}[h!]
    \centering
    \includegraphics[scale = 1]{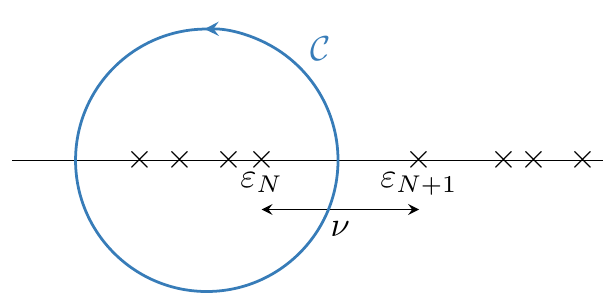}
    \caption[Definition of $A$]{Definition of $A$ and graphical interpretation
      of the \emph{Aufbau} principle and the existence of a gap.}
    \label{fig:aufbau}
  \end{figure}
  Then, one can use the expression \eqref{eq:At} of $A$
  and the expansion for $H$ in a neighborhood of $H_*$
  \[
    \Forall z \in\Cc,\quad \frac{1}{z-H} = \frac{1}{z-H_*}(H-H^*)\frac{1}{z-H_*}
    + O(\norm{H-H_*}_F^2)
  \]
  to get
    \[
    \begin{split}
      \Forall h\in\Hc,\quad
      \d {A}(H_*) h &= \frac{1}{2\pi\i}\oint_\Cc \frac{1}{z-H_*} h
      \frac{1}{z-H_*}\ \d z \\
      &= \sum_{k=1}^{N_b} \sum_{l=1}^{N_b}
      \prt{\frac{1}{2\pi\i}\oint_\Cc \frac{1}{z-\varepsilon_k} h_{kl}
      \frac{1}{z-\varepsilon_l}\ \d z} \phi_k\phi_l^*,\\
    \end{split}
  \]
  where $h_{kl}=\phi_k^*h\phi_l$.
  Now, let us denote by $1\leqslant i \leqslant N$
  the occupied orbitals ($\varepsilon_i$ is inside $\Cc$)
  and by $N+1 \leqslant a \leqslant N_b$ the virtual ones ($\varepsilon_a$ is outside $\Cc$).
  Then,
  \[
    \oint_\Cc \frac{1}{z-\varepsilon_i}\frac{1}{z-\varepsilon_a}\ \d z =
    \begin{cases}
      \displaystyle{\frac{1}{\varepsilon_i - \varepsilon_a} \quad\text{if
          $1\leqslant i \leqslant N < a \leqslant N_b $};} \\
      0 \quad\text{otherwise.}
    \end{cases}
  \]
  Thus, the sum becomes
  \[
    \d {A}(H_*) h
    = \sum_{i=1}^{N}\sum_{a=N+1}^{N_b} \frac{1}{\varepsilon_i-\varepsilon_a}
    \Big( h_{ia}\phi_i\phi_a^* + h_{ai}\phi_a\phi_i^*\Big)
    = -\Omega_*^{-1}\Pi_{P_*}h,
  \]
   and we finally get
  \[
    \Forall X\in\Tc_{P_*}\Mc_N,\quad \d \auf(P_*) X = -\Op^{-1}K_* X.
  \]

  Now, we compute the derivative of $f$ at point $P_*$.
  Let $X\in\Tc_{P_*}\Mc_N$ and $\gamma: I \to \Mc_N$ a smooth path defined on a
  real interval $I$ containing $0$ such that
  $\gamma(0) = P_*$ and $\dot{\gamma}(0) = X$.  First, we expand $\gamma(t)$
  around $0$ and $\auf$ around $\gamma(0) = P_*$ to obtain
  \[
    f(\gamma(t)) = P_* + t\Pi_{P_*}\prt{(1-\beta) + \beta \d \auf(P_*)} X + O\prt{t^2}.
  \]
  Thus, for $X\in\Tc_{P_*}\Mc_N$,
  \[
    \d f(P_*) X = \prt{(1-\beta) - \beta
      \Op^{-1}K_*} X.
  \]

  To conclude this proof, we compute the spectral radius of
  \[
    \d f(P_*) = (1-\beta) - \beta \Op^{-1}K_* = 1 - \beta \prt{1 + \Op^{-1}K_*}.
  \]
  First, notice that
  \[
    1 + \Op^{-1}K_* = 1 + \Op^{-1/2}\prt{\Op^{-1/2}K_*\Op^{-1/2}}\Op^{1/2}
  \]
  and thus, $1 + \Op^{-1}K_*$ and the symmetric operator
  $1 + \Op^{-1/2}K_*\Op^{-1/2}$ have
  the same eigenvalues.  Moreover, using the second-order optimality condition
  \eqref{eq:KO_spd}, with $ X = \Op^{-1/2} Y$, we get
  \begin{equation}
    \label{eq:jac_scf}
    \begin{split}
      \Forall Y\in T_{P_*}\Mc_N,\quad
      \croF{Y,\prt{1 + \Op^{-1/2}K_*\Op^{-1/2}} Y}
      &\geqslant \eta \croF{Y,\Op^{-1} Y} \\
      &\geqslant \frac{\eta}{\norm{\Op}_{\rm op}}\normF{Y}^2,
    \end{split}
  \end{equation}
  with $\norm{\Op}_{\rm op}$ the operator norm associated to $\normF{\cdot}$.
  Thus, all the eigenvalues of $ 1 + \Op^{-1/2}K_*\Op^{-1/2}$, hence of
  $1 + \Op^{-1}K$, are real and positive.
  Consequently,
  for $\beta$ small enough, the spectral radius
  $r\prt{\d f(P_*)}$ is less than 1 and we conclude by applying Lemma~\ref{lem:jac}.
\end{proof}

\begin{remark}[Case when the \emph{Aufbau} principle is not satisfied]
  \label{rmk:no_aufbau}
  In the case when the minimizer $P_*$ does not verify the \emph{Aufbau}
  principle, but does satisfy the condition that the eigenvalues of $1
  + \Op^{-1}K_*$ are positive
  (note that $\Op$ is not positive when the \emph{Aufbau} principle is not
  verified, but $1 + \Op^{-1}K_*$ might still have only positive eigenvalues),
  the damped SCF still converges locally to $P_*$ for $\beta>0$ small enough
  if we change the way we select the occupied orbitals to build $\auf(P)$
  (in this case, we do not pick those
  associated to the smallest $N$ eigenvalues of $H(P)$, but those corresponding to the occupied orbitals of $P_*$).
\end{remark}

We conclude this section by proving the local convergence of the
non-retracted variant of Algorithm~\ref{algo:scf}.

\begin{algorithm}[H]
  \SetAlgoSkip{bigskip}
  \KwData{$P^0 \in \Mc_N$}
  \While{convergence not reached}{
    solve $\displaystyle{\begin{cases}
        H(P^k)\phi_i^k = \varepsilon_i^k \phi_i^k,\ \varepsilon_1^k \leqslant \cdots \leqslant
        \varepsilon_N^k < \varepsilon_{N+1}^k
        \lq \cdots \lq \varepsilon^k_{N_b}\\
        \cro{\phi_i^k, \phi_j^k} = \delta_{ij}, \\
      \end{cases}}$\;
    $\auf(P^k) \coloneqq \displaystyle{\sum_{i=1}^N \phi^{k}_i\prt{\phi^{k}_i}^*}$\;
    $P^{k+1} \coloneqq {P^k + \beta\Pi_{P^k}\prt{\auf(P^k) - P^k}}$\;
  }
  \caption[Non-retracted damped SCF algorithm]{Non-retracted damped SCF algorithm}
  \label{algo:nonretracted_scf}
\end{algorithm}

\begin{theorem}
  \label{thm:linear_mixing}
  Let $E: \Hc \to \R$ and $P_*$ satisfy Assumption~\ref{asmp1} and Assumption~\ref{asmp2}.  Moreover, assume that
  \[
    \auf(P_*) = P_* \text{ and }
    \nu \coloneqq \varepsilon_{N+1} - \varepsilon_N > 0 \text{ (strong
      {\em Aufbau} principle)},
  \]
  where $(\varepsilon_i)_{1\lq i \lq N_b}$ are the eigenvalues of $H_*$
  ranked in nondecreasing order.

  Then, for $\beta>0$ small enough and $P^0\in\Hc$ close enough to $P_*$ and with trace $N$, the iterations
  \begin{equation}\label{eq:density_mixing}
    P^{k+1} \coloneqq P^k + \beta \prt{\auf(P^k) - P^k}
  \end{equation}
  are well-defined and $P^k$ linearly converges to $P_*\in\Mc_N$, with asymptotic rate
  $\max(r(1 - \beta J_\text{\emph{SCF}}), 1-\beta)$ where $J_\text{\emph{SCF}} \coloneqq 1 + \Op^{-1}K_*$.
\end{theorem}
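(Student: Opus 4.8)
The goal is to reduce Theorem~\ref{thm:linear_mixing} to the retracted case (Theorem~\ref{thm:scf}) by understanding how the non-retracted iteration \eqref{eq:density_mixing} behaves *transversally* to the manifold $\Mc_N$, the point being that the iteration \eqref{eq:density_mixing} makes sense on the whole affine space $\{P\in\Hc\mid\Tr P=N\}$, not just on $\Mc_N$. Write $g(P)\coloneqq P+\beta(\auf(P)-P)$, defined and $C^1$ on a neighborhood of $P_*$ in $\{\Tr P = N\}$ (using the gap assumption and the contour-integral formula \eqref{eq:At}, exactly as in the proof of Theorem~\ref{thm:scf}, to see that $\auf$ is $C^1$ near $P_*$). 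We have $g(P_*)=P_*$, so the iteration is a fixed-point iteration on a $C^1$ map of the affine space, and the asymptotic rate is governed by the spectral radius of $\d g(P_*)$ acting on the tangent space of $\{\Tr P = N\}$, i.e.\ on $\Hc_0\coloneqq\{X\in\Hc\mid\Tr X=0\}$. So the plan is: (i) compute $\d g(P_*)$ on $\Hc_0$; (ii) show its spectral radius is $\max(r(1-\beta J_{\text{SCF}}),\,1-\beta)<1$ for $\beta$ small; (iii) invoke a Banach-fixed-point argument as in Lemma~\ref{lem:jac} but on a vector space (which is in fact easier — no chart needed).

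For step (i), decompose $\Hc_0 = \Tc_{P_*}\Mc_N \oplus \Nc$, where $\Nc\coloneqq\{X\in\Hc\mid PXP \text{ and }(1-P)X(1-P)\text{ arbitrary, }PX(1-P)=(1-P)XP=0,\ \Tr X=0\}$ is the block-diagonal trace-zero part (the normal space to $\Mc_N$ inside $\{\Tr=N\}$). I claim $\d g(P_*)$ is block-triangular for this decomposition. On $\Tc_{P_*}\Mc_N$: since $\auf$ maps into $\Mc_N$, $\d\auf(P_*)$ maps $\Hc_0$ into $\Tc_{P_*}\Mc_N$, and we already computed in the proof of Theorem~\ref{thm:scf} that $\d\auf(P_*)X = -\Op^{-1}K_* X$ for $X\in\Tc_{P_*}\Mc_N$; hence for such $X$, $\d g(P_*)X = X + \beta(-\Op^{-1}K_* X - X) = (1-\beta J_{\text{SCF}})X$, exactly as in Theorem~\ref{thm:scf}. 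On $\Nc$: for $X\in\Nc$, $\auf$ depends on $H(P_*+tX)$ only through the spectral projector onto the lowest $N$ eigenvalues, and a first-order perturbation of $H$ in a direction whose projection onto $\Tc_{P_*}\Mc_N$ vanishes does not move that spectral projector to first order (the contour formula gives $\d A(H_*)h$ supported on the $ov$/$vo$ blocks, which vanishes precisely when $\Pi_{P_*}(\nabla^2E(P_*)X)$ vanishes — and since $X\in\Nc$ is block-diagonal and $\nabla^2E(P_*)X$ need not be, one must track this carefully; what actually matters is that $\d\auf(P_*)X\in\Tc_{P_*}\Mc_N$ always). So modulo the $\Tc_{P_*}\Mc_N$-component, $\d g(P_*)X = (1-\beta)X$ on $\Nc$. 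Thus in the block decomposition
\[
  \d g(P_*) = \begin{bmatrix} 1-\beta J_{\text{SCF}} & * \\ 0 & (1-\beta)\Id_{\Nc}\end{bmatrix},
\]
and the spectrum is the union of the spectrum of $1-\beta J_{\text{SCF}}$ on $\Tc_{P_*}\Mc_N$ and $\{1-\beta\}$.

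For step (ii): the eigenvalues of $J_{\text{SCF}}=1+\Op^{-1}K_*$ are real, positive, and bounded below (shown in the proof of Theorem~\ref{thm:scf}, inequality \eqref{eq:jac_scf}), so $r(1-\beta J_{\text{SCF}})<1$ for $\beta$ small; and $0<1-\beta<1$; hence $r(\d g(P_*)) = \max(r(1-\beta J_{\text{SCF}}),\,1-\beta)<1$ — the off-diagonal block $*$ does not affect the spectral radius of a block-triangular operator. For step (iii): apply the vector-space version of Lemma~\ref{lem:jac}, namely that a $C^1$ self-map of an open neighborhood of a fixed point in a finite-dimensional normed space with $r(\d g(P_*))<1$ converges linearly with that asymptotic rate; the proof is identical to Lemma~\ref{lem:jac} but with $g=\Id$ and no manifold chart. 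Finally, one checks well-definedness (the iterates stay in the region where $H(P^k)$ has a gap, by continuity and closeness to $P_*$) and that $P^k\to P_*\in\Mc_N$ — the latter because $\Nc$-component contracts, so the iterates are attracted to the manifold.

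**Main obstacle.** The delicate point is step (i) on the normal space $\Nc$: one must verify that $\d\auf(P_*)$ annihilates $\Nc$ *up to a vector in $\Tc_{P_*}\Mc_N$*, i.e.\ that the "new" contribution coming from the curvature terms relating a trace-zero block-diagonal perturbation $X$ to the actual displacement $P^{k+1}-P^k$ combines correctly to give the clean factor $1-\beta$ on $\Nc$. Concretely, one needs that $\d\auf(P_*)$, viewed as a map $\Hc_0\to\Hc_0$, has range inside $\Tc_{P_*}\Mc_N$ (immediate since $\auf$ is $\Mc_N$-valued) and that the $\Nc\to\Nc$ component of $\d g(P_*)=\Id+\beta(\d\auf(P_*)-\Id)$ is therefore exactly $(1-\beta)\Id_{\Nc}$. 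This is really a bookkeeping statement about block structure, but getting the decomposition $\Hc_0 = \Tc_{P_*}\Mc_N\oplus\Nc$ and the triangularity cleanly stated is where the care is needed; everything else follows the template of Theorem~\ref{thm:scf} and Lemma~\ref{lem:jac}.
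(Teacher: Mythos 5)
Your proof is correct and follows essentially the same route as the paper's: both arguments rest on the single key observation that $\d\auf(P_*)$ has range inside $T_{P_*}\Mc_N$ (because $\auf$ is $\Mc_N$-valued), which makes the Jacobian of the non-retracted iteration block upper-triangular with diagonal blocks $1-\beta J_{\text{SCF}}$ on the tangent space and $(1-\beta)\Id$ on the normal space, and then one reads off the spectral radius. The only cosmetic difference is that you restrict to the trace-zero hyperplane $\Hc_0$ and use the normal space $\Nc$ inside it, whereas the paper works directly in all of $\Hc$ with the orthogonal complement $(T_{P_*}\Mc_N)^\perp$; since the normal eigenvalue is $1-\beta$ in either case, the resulting rate is identical, so this is a detail of presentation rather than of substance.
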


Note that the iterates $P^k$ defined by \eqref{eq:density_mixing} have trace $N$ but do not lay on the manifold $\Mc_N$ in general.

\begin{proof}
  The proof follows that of Theorem~\ref{thm:scf}. This time, we need to
  compute the Jacobian matrix of
  $f : \Hc \ni P \mapsto P + \beta\prt{\auf(P) - P} \in \Hc$ at the minimizer
  $P_*\in\Mc_N$.  As we work in the whole space $\Hc$, the Jacobian
  matrix has the form, in the
  decomposition $\Hc = T_{P_*}\Mc_N \oplus (T_{P_*}\Mc_N)^\perp$,
  \[
    \d f(P_*) = \begin{bmatrix}
      1 - \beta J_\text{SCF} & \times \\
      0 & 1 - \beta \\
    \end{bmatrix},
  \]
  where $J_\text{SCF} = 1 + \Op^{-1} K_*$ has been computed in the proof of Theorem~\ref{thm:scf}.
  Hence, this time the algorithm converges to $P_*\in\Mc_N$ as long as $\beta$ is such that
  $\max(r(1-\beta J_\text{SCF}), 1-\beta) < 1$.
\end{proof}

In LDA and GGA Kohn-Sham models~\cite{martin2004}, the mean-field Hamiltonian $H(P)$ is actually a function $\widetilde H(\rho_P)$ of the density $\rho_P$ associated with the density matrix $P$. Since the map $P \mapsto \rho_P$ is linear, \eqref{eq:density_mixing} can be rewritten as
\[
  \rho^{k+1} = (1-\beta)\rho^k + \beta \Psi(\rho^k),
\]
where $\Psi(\rho)=\rho_{A(\widetilde H(\rho))}$. We can therefore
interpret \eqref{eq:density_mixing} as the equivalent density matrix
formulation of this density mixing algorithm.

\subsection{Comparison}
\label{sec:com}
In this section, we proved the local convergence of Algorithm~\ref{algo:grad} and Algorithm~\ref{algo:scf}.
and we obtained asymptotic convergence rates. On the tangent space, both
Jacobian matrices are of the form $1 - \beta J$ where $J$ has positive real spectrum and
\begin{itemize}
  \item for the gradient descent: $J_\text{grad} = K_* + \Op$, which is self-adjoint for
    the Frobenius inner product;
  \item for the damped SCF algorithm if the strong {\em Aufbau}
    principle is satisfied at $P_*$: $J_\text{SCF} = 1+~\Op^{-1} K_*$,
    which is self-adjoint for the inner product
    $\cro{\cdot,\cdot}_\Op \coloneqq \croF{\Op\cdot,\cdot}$.
\end{itemize}
One can notice that, \emph{in the linear regime}, the SCF iterations
correspond to a matrix splitting of the gradient iterations.
Whether this results in a faster method or not depends not only on the
relative conditioning of the iteration matrices but also on the
relative cost of each step.

To have the fastest convergence, we want the eigenvalues of $1 - \beta J$ to
be as close to 0 as possible. If we denote by $\lambda_1$ (resp. $\lambda_N$)
the smallest (resp. largest) eigenvalue of $J$, the optimal step $\beta_*$ is
the minimizer of $\min_{\beta}\max\set{|1-\beta\lambda_1|, |1-\beta\lambda_N|}$,
which is given by
\[
  \beta_* = \frac{2}{\lambda_1 + \lambda_N}.
\]
Then, the rate of convergence  is, with $\kappa \coloneqq \lambda_N/\lambda_1$ the spectral condition number of $J$,
\[
  r = \frac{\kappa-1}{\kappa+1}.
\]
Now, we can evaluate the conditioning of $J$ for the two algorithms :
\begin{itemize}
  \item for the gradient descent, we have
    \begin{equation}
    \label{eq:bound_grad}
      \kappa(J_\text{grad}) \leqslant \frac{\norm{\Op}_{\rm op} + \norm{K_*}_{\rm op}}{\eta},
    \end{equation}
    where $\eta$ is the coercivity constant in the nondegeneracy
    Assumption~\ref{asmp2}. First, the smaller $\eta$, the more difficult the
    convergence. Note however that there is no relationship in general
    between $\eta$ and the gap $\nu$. Second, the bigger
    $\norm{\Op}_{\rm op} = \varepsilon_{N_b} - \varepsilon_1$, the
    more difficult the convergence. In particular, for models arising
    from the discretization of partial differential equations,
    $\varepsilon_{N_b}~-~\varepsilon_1~\to~\infty$ when the
    discretization is refined. In practice, this issue is solved by
    preconditioning (see Remark~\ref{rmk:prec}).
  \item for the damped SCF algorithm, a naive bound would be
    \[
      \kappa(J_\text{SCF}) \leqslant \norm{\Op}_{\rm op} \frac{1 +
        \nu^{-1}\norm{K_*}_{\rm op}}{\eta}.
    \]
    In this bound the right-hand side diverges when $\norm{\Op}_{\rm
      op} \to \infty$ as above, whereas the left-hand side may actually remain bounded. For instance, under the uniform coercivity assumption \cite{Cances2012}
    \[
      \Forall X\in T_{P_*}\Mc_N,\quad
      \croF{X,\prt{\Op + K_*} X}
      \geqslant \widetilde\eta\croF{\Op X, X},
    \]
    with $\widetilde{\eta}$ independent of $N_b$ (which is often the case in practice),
     we have
    \[
      \kappa(J_\text{SCF}) \leqslant \frac{1 + \nu^{-1}\norm{K_*}_{\rm
        op}}{\widetilde\eta}.
    \]
    In contrast with the bound \eqref{eq:bound_grad},  we can see that the smaller the gap $\nu$, the slower the convergence.
\end{itemize}

As a special case, if we consider the case where the Hessian
$\nabla^2E \equiv 0$, \ie a linear eigenvalue problem. Then the SCF
algorithm converges in one iteration, which is consistent with
$J_\text{SCF} = 1$. The gradient descent with optimal step locally
converges with asymptotic rate $r = \frac{\kappa - 1}{\kappa + 1}$
where $\kappa = \frac{\varepsilon_{N_{b}} -
  \varepsilon_{1}}{\varepsilon_{N+1} - \varepsilon_{N}}$.

The convergence rates we derived in Theorem~\ref{thm:grad} and
Theorem~\ref{thm:scf} are consistent with well-known convergence issues,
for instance the failure of the simple damped SCF algorithm to
converge for systems with small gaps \cite{rudberg2012difficulties} (although
Section~\ref{sec:DFTK} shows this is not necessarily true for more sophisticated acceleration methods).

\begin{remark}[Preconditioning]
  \label{rmk:prec} We discuss here the extension of Theorem~\ref{thm:grad} to the
  preconditioned gradient descent:
  \[
  P^{k+1} \coloneqq R\prt{P^k - \beta \Pi_{P^k }B\Pi_{P^k}\prt{\nabla E(P^k)}}
  \]
  with $B: \mathcal H \to \mathcal H$ a symmetric positive definite
  preconditioner. If we denote by
  $\widetilde{B}_* \coloneqq \Pi_{P_*}B\Pi_{P_*}$ its restriction to
  the tangent plane, the Jacobian matrix of the gradient becomes
  $1 - \beta \widetilde{B}_*(\Op + K_*)$ where $(\Op + K_*)$ is
  positive definite (under Assumption~\ref{asmp2}) and the proof of
  local convergence for $\beta$ small enough follows exactly in the
  same way, using the positive definiteness of $\widetilde{B}_{*}$ to
  show that $\widetilde B_{*}(\Op + K_*)$ has real positive spectrum.
  The same analysis holds true for the preconditioned SCF algorithm.
  In practice, preconditioning is a crucial tool to accelerate
  iterations, in particular in order to achieve mesh- and domain-size
  independence of the number of iterations for discretized partial
  differential equations. However, we are interested here in the
  intrinsic aspects of each algorithm (direct minimization \emph{vs}
  SCF) and the influence of physical parameters (e.g. the gap $\nu$),
  so that the study of preconditioned algorithms is not in the scope
  of this paper.
\end{remark}

\begin{remark}[Dielectric operator]
  In the context of Kohn-Sham density functional theory, the operator
  $\prt{ 1 + K_*\Op^{-1}}^{-1}$, the transpose of the inverse of the
  Jacobian of the simple SCF mapping, is known as the dielectric
  operator: it represents the infinitesimal change in the
  self-consistent Hamiltonian $H(P_{*})$ in response to a change in
  the energy functional. Our results show that this operator is
  well-defined and has real positive spectrum, with no assumption on
  the sign of Hartree-exchange-correlation kernel $K_{*}$, recovering
  in an algebraic framework the results of
  \cite{dederichs1983self,gonze1996towards} obtained using a different
  variational principle.
\end{remark}

\section{Numerical tests}
\label{sec:num_res}

We present here some numerical experiments to illustrate our theoretical results,
explore their limits and investigate the global behavior of the
algorithms.
First, we start by specifying the retraction $R$ that we use in our numerical
tests.
In Section~\ref{sec:toy_model}, we use a simple toy model for which we can
control the gap and analytically compute the exact minimizer: this allows us
to study the impact of the gap on the convergence of Algorithm~\ref{algo:grad} and Algorithm~\ref{algo:scf}.
In Section~\ref{sec:chaos}, we show that simple (nondamped) SCF iterations can
exhibit chaotic behavior for some nonlinearities.
Then, in Section~\ref{sec:validate}, we report numerical tests for a 1D
Gross-Pitaevskii model ($N=1$) and its fermionic version for $N=2$.
Finally, in Section~\ref{sec:DFTK}, we present results obtained with a more
realistic case: Silicon in the framework of the Kohn-Sham DFT.

\subsection{The retraction $R$}
\label{sec:retrac}

We choose the following algorithm: for a given symmetric matrix $\widetilde P$ close to $\mathcal M_{N}$ with
eigendecomposition $\widetilde P = V \widetilde D V^{*}$ with
$\widetilde D$ diagonal and $V$ orthogonal, we set the diagonal matrix $D$ as
\begin{align*}
  D_{ii} =
  \begin{cases}
    1& \text{ if $\widetilde D_{ii} > 0.5$}\\
    0& \text{ otherwise}\\
  \end{cases}.
\end{align*}
and $R(\widetilde P) = V D V^{*}$. When $\widetilde P$ is close to
$\mathcal M_{N}$, its eigenvalues are close to either $0$ or $1$.
Given a contour $\Cc$ enclosing only the eigenvalues close to 1
, $R$ has the following explicit
expression
\begin{equation}
  \label{eq:C_retrac}
  R(P) = \frac{1}{2\pi\i} \oint_\Cc \frac{1}{z-P}\ \d z.
\end{equation}
Therefore, it follows from arguments similar to those used in the proof of
Theorem~\ref{thm:scf} that $R$ is analytic and satisfies
Assumption~\ref{asmp3}.

\subsection{A toy model with tunable spectral gap}
\label{sec:toy_model}
We work here in the very simple framework of real density matrices of order 2,
\ie the $2\times2$ real matrices $P$ such that $P^* = P$, $P^2=P$ and
$\Tr(P)=1$. Then, we consider the following minimization problem, with a parameter $\varepsilon \gq 0$:
energy functional
\[
E_\varepsilon(P) \coloneqq \Tr\prt{\prt{P -
    \begin{bmatrix}1 & \varepsilon \\ \varepsilon & 0\end{bmatrix}
  }^2},
\]
for parameters $\varepsilon \gq 0$. The gradient and Hessian of $E$ are
\begin{align*}
  H_{\varepsilon}(P) &= 2\prt{P -
    \begin{bmatrix}1 & \varepsilon \\ \varepsilon & 0\end{bmatrix}},\\
  \nabla^{2} E_{\varepsilon}(P)&= 2.
\end{align*}
Simple computations show that the set of rank-$1$ projectors on $\R^2$ can be parameterized as
\[
  \Mc_1 \coloneqq \set{P(a,b) =
    \begin{bmatrix}1-a & b \\ b & a\end{bmatrix} \;\middle|\; a\in[0,1],\ b =\pm\sqrt{a(1-a)
  }}.
\]
The eigenvalues of $H_\varepsilon$ at $P(a,b)\in\Mc_1$ are
$\pm 2\sqrt{a^2 + (b-\varepsilon)^2}$. The gap is thus
$\nu(a,b)\coloneqq4\sqrt{a^2 + (b-\varepsilon)^2}$.

\subsubsection*{The case $\varepsilon=0$}Here, the unique minimum is clearly
\[
  P(0,0) = \begin{bmatrix} 1 & 0 \\ 0 & 0 \end{bmatrix} \in \Mc_1
\]
and the gap is zero. Since $\nabla^{2} E = 2$, this minimum satisfies
Assumption~\ref{asmp2} with $\eta=2$.

\subsubsection*{The case $\varepsilon > 0$} We compute
\[
  E_\varepsilon(P(a,b)) = 2(a+\varepsilon^2 - 2\varepsilon b),
\]
and therefore
\[
  E_\varepsilon\prt{P({a,\sqrt{a(1-a)}})} \leqslant E_\varepsilon\prt{P(a,-\sqrt{a(1-a)})}.
\]
Hence, to compute the minimizer of the energy, we can restrict ourselves to
the one-dimensional manifold
\[
P(a) =
\begin{bmatrix}
  1-a & \sqrt{a-a^2} \\ \sqrt{a-a^2} & a
\end{bmatrix}
\]
with $a \in [0,1]$.
Then, the energy is
\begin{equation}
  \label{eq:nrj_toy_a}
  E_\varepsilon(P(a)) = 2\prt{a+\varepsilon^2 - 2\varepsilon \sqrt{a(a-1)}}.
\end{equation}
The first-order condition yields
\begin{align*}
  a = \frac{1 \pm \sqrt{1 - \frac{4\varepsilon^2}{1+4\varepsilon^2}}}{2},
\end{align*}
with the lowest energy achieved at
\begin{align*}
  a(\varepsilon) \coloneqq \frac{1 - \sqrt{1 - \frac{4\varepsilon^2}{1+4\varepsilon^2}}}{2}.
\end{align*}
The gap
$\nu(\varepsilon) \coloneqq 4\sqrt{a(\varepsilon)^2 +
  \prt{\sqrt{a(\varepsilon)\prt{1-a(\varepsilon)}}-\varepsilon}^2}$ goes
to 0 monotonically when $\varepsilon\to0$. In particular, for
$\varepsilon \approx 0$ we have $a(\varepsilon) \approx
\varepsilon^{2}$ and $\nu(\varepsilon) \approx 4 \varepsilon^{2}$.
This model can thus be used
to study the influence of the gap on the convergence of the two
algorithms.

\subsubsection*{Influence of $\varepsilon$ on the convergence}
We run Algorithm~\ref{algo:grad} and Algorithm~\ref{algo:scf} with fixed $\beta$
on this system. We start from a random point on the manifold $\Mc$.
We take as convergence
criterion $\normF{P^{k}-P(a(\varepsilon))}~\lq~10^{-12}$, and consider the algorithm has failed if convergence was not achieved after $50,000$ iterations.

On Figure~\ref{fig:cvg}, we plotted the number of iterations to achieve
convergence for each algorithm as a function of $\varepsilon$ (without changing the starting point), for two
different values of $\beta$: $10^{-1}$ and $10^{-3}$. The results
confirm the theory we developed in Section~\ref{sec:com}: the gap has a
strong influence on the convergence behavior of the SCF algorithm.   Indeed, as
the gap decreases, smaller and smaller damping parameters must be
used, and the number of iterations increases. In fact for this system,
$1 + \Op^{-1} K_{*}$ has a single eigenvalue equal to
$1 + \frac{2}{\nu(\varepsilon)} \approx 1 + \frac{1}{2\varepsilon^{2}}$
for $\varepsilon$ small. Thus we expect convergence for
$\beta < 4 \varepsilon^{2}$, and therefore a critical
$\varepsilon_{\rm c}$ of $\approx 0.158$ for $\beta = 10^{-1}$ and
$0.0158$ for $\beta = 10^{-3}$, with a number of iterations
proportional to $\frac 1 {\varepsilon - \varepsilon_{\rm c}}$ when
$\varepsilon > \varepsilon_{\rm c}$. The numerical results are in perfect
agreement with this prediction. By contrast, the gradient algorithm is
much less affected by the smallness of the gap, and converges in an essentially constant number of iterations: our
prediction for the convergence rate of that method is
$r = 1 - \beta(\nu(\varepsilon) + 2) \approx 1 - 2\beta$ for
$\varepsilon$ small, and therefore a number of iterations for
convergence to $10^{-12}$ of $124$ for $\beta = 10^{-1}$ and
$1.3 \times 10^{4}$ for $\beta = 10^{-3}$, again in perfect agreement
with the numerical results.

\begin{figure}[h!]
  \subfigure[Number of iterations to reach convergence for both algorithms
  as a function of $\varepsilon$ for $\beta=10^{-1}$.]{
    \includegraphics[scale = 0.8]{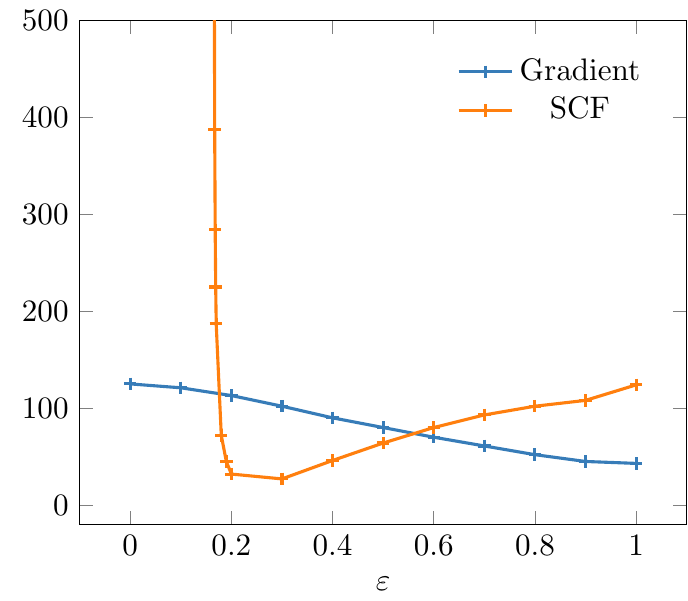}
    \label{fig:beta_10-1}}
  \subfigure[Number of iterations to reach convergence for both algorithms
   as a function of $\varepsilon$ for $\beta=10^{-3}$. On the left is
    a global view of the convergence for {$\varepsilon\in[0,1]$} and on the
    right, we zoom in the neighbourhood of $\varepsilon_{\rm c}$.  ]{
    \includegraphics[scale = 0.8]{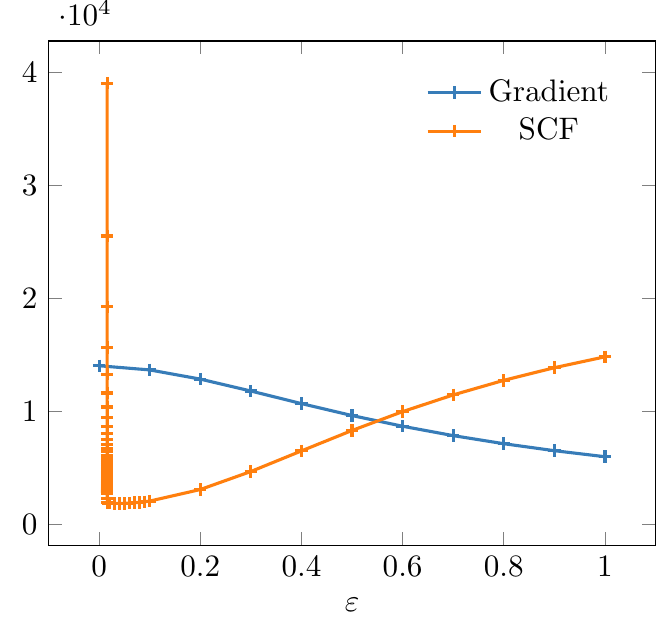}
    \hfill
    \includegraphics[scale = 0.8]{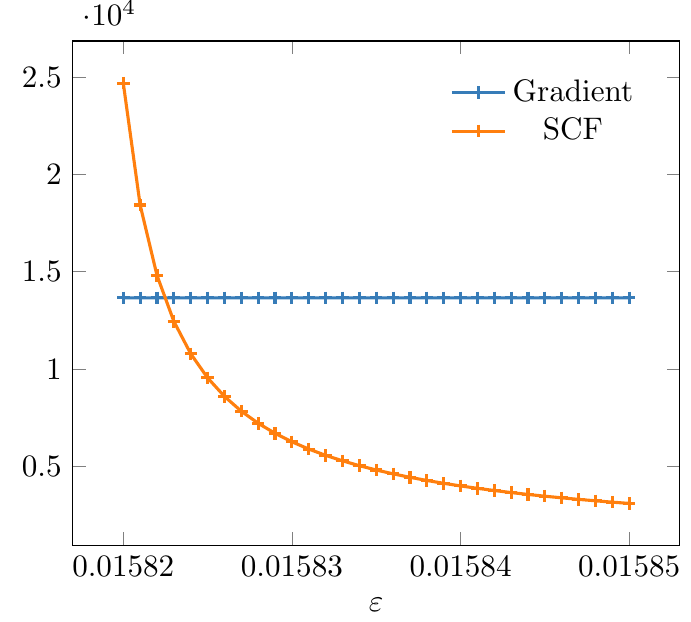}
    \label{fig:beta_10-3}}
  \caption{Comparison of the convergence of both algorithms depending on
    $\varepsilon$ for two different values of the step $\beta$.}
  \label{fig:cvg}
\end{figure}

\subsection{Chaos in SCF iterations} \label{sec:chaos}
We consider in this section another toy model a
model inspired from the one proposed in
\cite[Section 2.1]{liuAnalysisDiscretizedKohn2015}. Let $h \in \R^{3 \times 3}_\text{sym}$ and $J\in \R^{3 \times 3}$ be the matrices defined by
\[
  h \coloneqq \begin{bmatrix}
    1.4299 & -0.2839 & -0.4056 \\
    -0.2839 & 1.1874 & 0.2678 \\
    -0.4056 & 0.2678 & 2.3826 \\
  \end{bmatrix} \quad \mbox{and} \quad  J \coloneqq h^{-1}
\]
and the energy functional $E_{c_1,c_2} : \R^{N_b \times N_b}\to \R$ defined by
\begin{equation}\label{eq:Echaos}
E_{c_1,c_2}(P) = \Tr(hP) +  \frac {c_1}2 \rho_P^* J \rho_P- c_2 \sum_{j=1}^3 \rho_{P,j}^{4/3},
\end{equation}
where $c_1,c_2 \in \R_+$ are nonnegative real parameter and where the density
$\rho_P \in \R^3$ associated with the density matrix $P$ is given by
$\rho_{P,j}=P_{jj}$ for $j=1,2,3$. This model is reminiscent of a
Kohn-Sham LDA model, with $h$ playing the role of the core
Hamiltonian, $J$ of the Hartree operator and the third term in $E_{c_1,c_2}$
of the exchange-correlation energy. We seek the minimizers of $E_{c_1,c_2}$ on
$\Mc_1$.

We study the behavior of the simple SCF (Roothaan) iterations $P^k=\auf(P^k)$ with initial guess
$P^0=\phi_0\phi_0^*$ with $\phi_0$ a random vector of norm 1.

\subsubsection*{The case $c_2=0$}
Here, the energy functional is the sum of a linear and a quadratic
term. In this case,
either $(P^k)_{k \in \N}$ converges to a critical point of the problem (in
practice a local minimizer), or it has two different accumulation points
$P_{\rm odd}$ and $P_{\rm even}$, none of them being a critical point, and the
iterates oscillates between the two, in the sense that
$P^{2k+1} \to P_{\rm odd}$ and $P^{2k} \to P_{\rm even}$ when
$k\to\infty$ \cite{cancesConvergenceSCFAlgorithms2000c,levittConvergenceGradientbasedAlgorithms2012b}.
This is due to the fact that we have
\begin{align*}
P^{2k+1}&=\mbox{argmin} \left\{ \widetilde E_{c_1,0}(P^{2k},P), \; P \in \Mc_1 \right\}, \\
P^{2k+2}&=\mbox{argmin} \left\{ \widetilde E_{c_1,0}(P,P^{2k+1}), \; P \in \Mc_1 \right\},
\end{align*}
with
$$
\widetilde E_{c_1,0}(P,P')\coloneqq \frac 12 \Tr(hP) + \frac 12 \Tr(hP') +  \frac {c_1}2 \rho_P^* J \rho_P',
$$
so that $(P^{2k},P^{2k+1})$ converges to a minimizer of
$\widetilde E_{c_1,0}$ on $\Mc_1 \times \Mc_1$. When $c_1$ is small,
the simple SCF algorithm converges: for $c_{1} = 0$, the matrix $h$
has a nondegenerate lowest eigenvalue and the algorithm converges in
one iteration. When the value of $c_1$ increases, we observe
numerically a bifurcation at a critical value $c_{1,*} \simeq 0.28$
after which the algorithm oscillates between two states.

\subsubsection*{The case $c_2=1$}
Here the energy is not quadratic, and the previous theory does not
apply.
In Figure~\ref{fig:chaos}, we vary $c_1$ and plot the value of $\rho_1$ for the
last 40 out of 1,500 SCF iterations.
For this case, we still observe that the algorithm converges for $c_1$ small
enough ($0 \le c_1 < c_{1,*} \simeq 1.38$), and oscillates between two states for $c_1$
slightly larger than $c_{1,*}$. However, in contrast with the $c_2=0$
case, this is followed by a cascade of cycles of increasing periods,
transitioning to a chaotic region, following the ``period-doubling
route to chaos'' observed for other types of chaotic systems such as
the logistic map \cite{strogatz2001nonlinear}.

\begin{figure}[h!]
  \hfill
  \includegraphics[scale = 0.8]{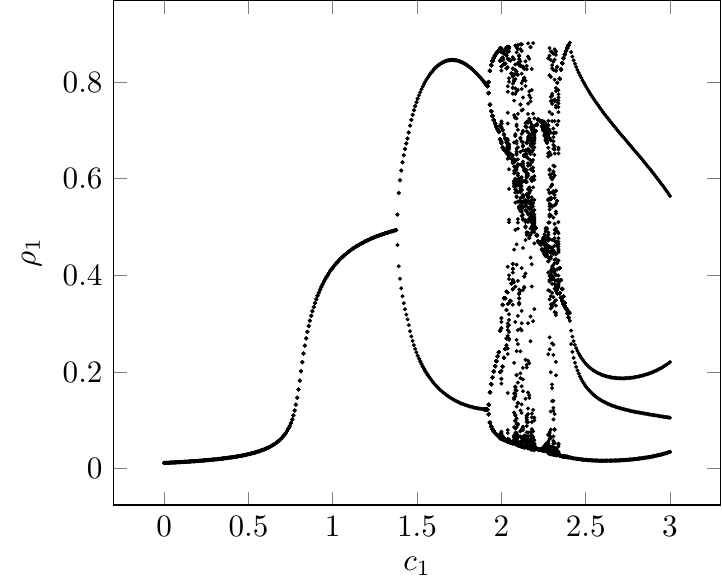}
  \hfill
  \includegraphics[scale = 0.8]{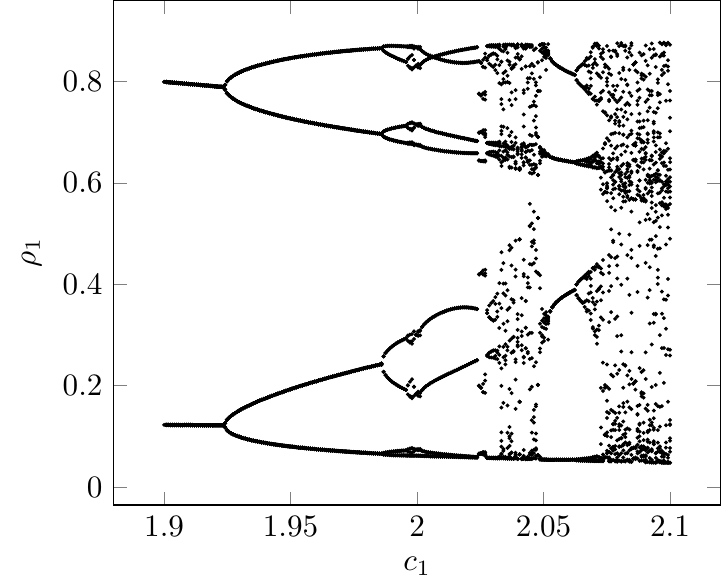}
  \hfill{~}
\caption{Chaotic behavior of the simple SCF map for the energy functional
  $E_{c_1,1}$ defined by \eqref{eq:Echaos} and $N=1$ as a function of $c_1$.
  On the left is a global view of the bifurcation and the right is a zoom on
  part of the interval on which we observe a chaotic behavior.}
\label{fig:chaos}
\end{figure}

\subsection{Local convergence for a 1D nonlinear Schr\"odinger equation}
\label{sec:validate}
In this section, we present a simple 1D numerical experiment to validate on a
more physically relevant system the
sharpness of the convergence rates we derived in the previous section. As the
goal here is to analyze the behavior of the simplest representative of each
class of algorithms when physical parameters (such as the gap) vary,
we chose unpreconditioned algorithms.
We consider a discretized 1D Gross-Pitaevskii model
$(N=1)$ on the torus, and its (non-physical) fermionic counterpart for $N=2$. At the continuous level, the minimization set is
$$
\left\{ \gamma \in {\mathcal L}(L^2_{\rm per}), \; \gamma^2=\gamma=\gamma^*, \; \Tr(\gamma)=N \right\},
$$
where ${\mathcal L}(L^2_{\rm per})$ is the space of bounded operators on $L^2_{\rm per}\coloneqq\{u \in L^2_{\rm loc}(\R) \; | \; u(\cdot-1)=u(\cdot) \}$, and the energy functional is defined as
$$
{\mathcal E}_\alpha(\gamma) = {\Tr}_{L^2_{\rm per}} \left( - \frac 12 \Delta \gamma \right) + \int_0^1 V \rho_\gamma + \frac{\alpha}2 \int_0^1 \rho_\gamma^2,
$$
where $\rho_\gamma$ is the density of the density matrix $\gamma$,
$\alpha \in \R_+$ and $V$ is an asymmetric double-well external potential chosen equal to
\begin{equation}
  \label{eq:potential}
V(x) \coloneqq -C\left(\exp\left(-c\cos\left(\pi(x-0.20)\right)^2\right) +
2\exp\left(-c\cos\left(\pi(x+0.25)\right)^2\right)\right),
\end{equation}
with $c = 30$ and $C = 20$ (Figure~\ref{fig:V}).
\begin{figure}[h!]
  \includegraphics[scale = 0.8]{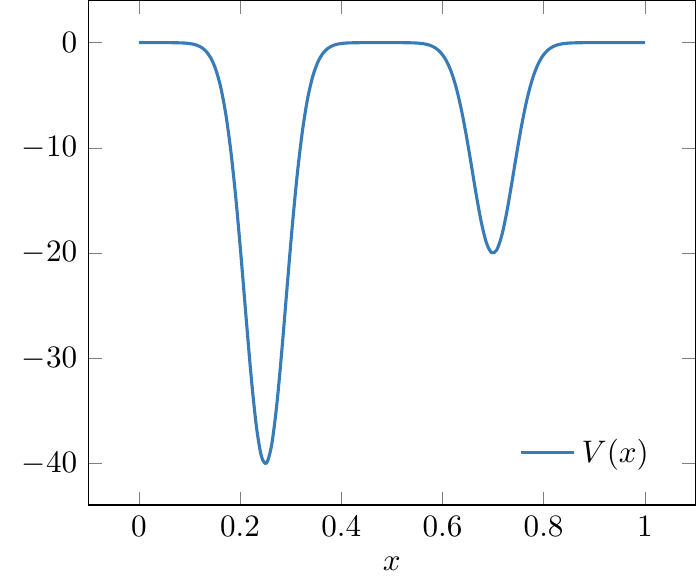}
  \caption{$V$ for $c=30$ and $C=20$.}
  \label{fig:V}
\end{figure}

The Euler-Lagrange equations of this minimization problem are
\begin{equation}
  \label{eq:GP}
 \gamma_*=\sum_{i=1}^N (\phi_i,\cdot)_{L^2_{\rm per}} \phi_i, \quad \rho_* = \sum_{i=1}^N|\phi_i|^2, \quad  -\frac{1}{2}\Delta\phi_i + V\phi_i + \alpha\rho_* \phi_i = \varepsilon_i\phi_i, \quad \int_{0}^1 {\phi_i}\phi_j = \delta_{ij}.
\end{equation}
We discretize this model using the finite difference method with a uniform
grid of step size $\delta=1/N_b$, which leads to the
finite-dimensional model
\begin{equation}\label{eq:FDNLS}
\inf\left\{ E_\alpha(P), \; P \in \Mc_N \right\},
\end{equation}
where
\begin{equation}\label{eq:discr_nrj}
  \Forall P\in\Hc,\quad E_\alpha(P)\coloneqq
  \Tr(hP) + \frac{\alpha}{2} \delta\sum_{i=1}^{N_b}\prt{\frac{P_{ii}}{\delta}}^2,
\end{equation}
the nonzero entries of the matrix $h \in \R^{N_b \times N_b}$ being given by
$$
\Forall 1\le i \le N_{b}, \quad h_{ii}=\frac{1}{\delta^2}+V(i\delta), \quad h_{i,i+1}=h_{i,i-1}=-\frac{1}{2\delta^2}.
$$
where we identify the sites $0$ and $N_{b}$ on the one hand and
$N_{b}+1$ and $1$ on the other. With this discretization, the discrete
density is then given by
$\rho(i\delta) \approx \rho_i \coloneqq P_{ii}/\delta$. We compare
the fixed-step gradient descent and damped SCF algorithms
(Algorithm~\ref{algo:grad} and Algorithm~\ref{algo:scf}) on this problem for
various values of $\alpha$, using as starting point the ground state
for $\alpha=0$. The functional $E$ is smooth. To check Assumption~\ref{asmp2}
we notice that $E$ is a convex functional of $P$, so that
$\nabla^{2} E(P) \geqslant 0$. Therefore, at any local minimizer
satisfying the strong \emph{Aufbau} principle,
$\Op + K_{*} \geqslant \Op \ge \eta > 0$ and therefore Assumption~\ref{asmp2}
is satisfied. In the case where the \emph{Aufbau} principle is not
satisfied, Assumption~\ref{asmp2} is not {\it a priori} always satisfied, so
we check it \emph{a posteriori} by computing the lowest eigenvalue of
$\Omega_{*} + K_{*}$.

We prove in the Appendix the following lemma, which collects some of the
mathematical properties of the discretized model under consideration.
The proof of this lemma, given in the appendix, strongly relies on the
properties of our particular model (one-dimensional difference
equation with periodic boundary conditions and a specific nonlinearity).

\begin{lemma}[Mathematical properties of  \eqref{eq:FDNLS}] \label{lem:NLS} Let $\alpha \in \R_+$.
\begin{enumerate}
  \item For $N=1$, the optimization problem \eqref{eq:FDNLS} has a unique
    minimizer $P_*$. In addition, $P_*$ can be written as $P_*=\phi_*\phi_*^*$,
    with $\phi_* \in \R^{N_b}$, $\phi_*^*\phi_*=1$, and $\phi_*$ positive
    componentwise, and $P_*$ satisfies the strong {\em Aufbau} principle.
  \item For $2 \le N \le N_b$, with $N_b \ne 2N$ if $N_b \in 4\N^\ast$, the relaxed constrained optimization problem
\begin{equation} \label{eq:relaxed}
\inf\left\{ E_\alpha(P), \; P \in {\rm CH}(\Mc_N) \right\},
\end{equation}
where ${\rm CH}(\Mc_N)=\{P \in \Hc, \; P=P^*, \; 0 \le P \le 1, \, \Tr(P)=N \}$
is the convex hull of $\Mc_N$, has a unique minimizer $P_*$. Either
$P_* \in \Mc_N$, in which case $P_*$ is the unique minimizer of
\eqref{eq:FDNLS} and satisfies the {\em Aufbau} principle, or
$P_* \notin \Mc_N$, in which case the eigenvalues
$\varepsilon_1 \le \cdots \le \varepsilon_{N_b}$ of the mean field Hamiltonian
matrix $H_*=\nabla E_\alpha(P_*)$ satisfy
$\varepsilon_{N-1} < \varepsilon_{N} = \varepsilon_{N+1} < \varepsilon_{N+2}$
and none of the local minimizers to \eqref{eq:FDNLS} satisfies the {\em
  Aufbau} principle.
\end{enumerate}
\end{lemma}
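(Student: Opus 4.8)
The plan is to establish the two items separately, relying on two structural features of \eqref{eq:FDNLS}. First, writing $\rho_P\coloneqq(P_{ii})_i$, formula \eqref{eq:discr_nrj} shows that $E_\alpha$ is a \emph{convex} function on $\Hc$: it is the linear functional $P\mapsto\Tr(hP)$ plus $\frac{\alpha}{2}\delta\sum_i(\rho_{P,i}/\delta)^2$, a \emph{strictly} convex function of the diagonal $\rho_P$, which depends linearly on $P$; correspondingly $H(P)=\nabla E_\alpha(P)=h+\frac{\alpha}{\delta}\,\mathrm{diag}(\rho_P)$. Second, $h$, and hence every $H(P)$, is an irreducible matrix with strictly negative off-diagonal entries (a periodic discrete Schr\"odinger operator on the $N_b$-cycle). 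From the latter, $cI-H(P)$ has nonnegative entries and is irreducible for $c$ large, so Perron--Frobenius gives that the lowest eigenvalue of $H(P)$ is simple with a componentwise positive eigenvector; moreover the eigenvalue equation for $H(P)$ is a second-order linear recurrence, so its eigenspaces have dimension at most $2$ and a nonzero eigenvector has no two consecutive vanishing components.

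For $N=1$: existence of a minimizer of \eqref{eq:FDNLS} follows from compactness of $\Mc_1$. Writing $P=\phi\phi^*$ with $\normF{\phi}=1$, replacing $\phi$ by its componentwise modulus leaves $\sum_i h_{ii}\phi_i^2$ and $\sum_i\phi_i^4$ unchanged and can only decrease the (negative-coefficient) off-diagonal part of $\phi^*h\phi$, so some minimizer $\phi_*$ has nonnegative entries. It solves $H_*\phi_*=\varepsilon\phi_*$ with $H_*=h+\frac{\alpha}{\delta}\mathrm{diag}(\phi_{*,i}^2)$, and Perron--Frobenius forces $\phi_*>0$ componentwise and $\varepsilon=\varepsilon_1<\varepsilon_2$, i.e.\ the strong \emph{Aufbau} property. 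For uniqueness, if two minimizers $\phi_1\phi_1^*$ and $\phi_2\phi_2^*$ existed, convexity would make $E_\alpha$ affine on the joining segment; since $P\mapsto\Tr(hP)$ is affine while the diagonal term is strictly convex, this forces $(\phi_1)_i^2=(\phi_2)_i^2$ for all $i$, and (each vector taken $\ge0$ as above) $\phi_1=\phi_2$. A last short step---equality in the modulus inequality forces all entries of any minimizing vector to share one sign---shows every minimizer of \eqref{eq:FDNLS} equals $\phi_*\phi_*^*$.

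For $N\ge2$: \eqref{eq:relaxed} has a minimizer by compactness and convexity of ${\rm CH}(\Mc_N)$. Strict convexity of the diagonal term shows, as above, that all relaxed minimizers share one density $\rho_*$, hence one mean-field Hamiltonian $H_*=h+\frac{\alpha}{\delta}\mathrm{diag}(\rho_*)$, and by convexity each relaxed minimizer also minimizes the linear functional $P\mapsto\Tr(H_*P)$ on ${\rm CH}(\Mc_N)$, whose minimum is $\sum_{i=1}^N\varepsilon_i$. I first show $P_*$ is unique. If $\varepsilon_N<\varepsilon_{N+1}$, that linear problem has the single solution $\sum_{i=1}^N\phi_i\phi_i^*$, so $P_*$ is unique. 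If $\varepsilon_N=\varepsilon_{N+1}$, the ``multiplicity $\le2$'' property gives $\varepsilon_{N-1}<\varepsilon_N=\varepsilon_{N+1}<\varepsilon_{N+2}$, and one checks that the relaxed minimizers are exactly $P=\Pi_{<N}+Q$, with $\Pi_{<N}=\sum_{i<N}\phi_i\phi_i^*$, $Q=Q^*$ supported on $W\coloneqq\Ker(H_*-\varepsilon_N)$ (two-dimensional), $0\le Q\le\Pi_W$ (projector onto $W$), $\Tr Q=1$, and with the diagonal of $Q$ prescribed equal to $\rho_*$ minus the diagonal of $\Pi_{<N}$. Two admissible $Q$'s differ by a traceless symmetric operator $M$ on $W$ with $M_{ii}=0$ for all $i$; writing $M=\lambda(w_1w_1^*-w_2w_2^*)+\mu(w_1w_2^*+w_2w_1^*)$ in an orthonormal basis $(w_1,w_2)$ of $W$, the condition $M_{ii}=0$ says the binary quadratic form $\lambda(x^2-y^2)+2\mu xy$ vanishes at all $N_b$ points $((w_1)_i,(w_2)_i)$. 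A nonzero real binary quadratic form vanishes on at most two lines through the origin, and since a nonzero eigenvector of $H_*$ has no two consecutive zeros, this can only occur if $N_b$ is even and $W$ is spanned by two eigenvectors supported respectively on the even and odd sites; this in turn forces $H_*$ to have constant diagonal, whence the kernel structure of the resulting circulant matrix forces $N_b\in4\N^\ast$ and $N=N_b/2$---excluded by hypothesis. Hence $M=0$, $Q$ is unique, and $P_*$ is unique.

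It remains to read off the dichotomy. If $P_*\in\Mc_N$, then $\min_{\Mc_N}E_\alpha\ge\min_{{\rm CH}(\Mc_N)}E_\alpha=E_\alpha(P_*)$ with $P_*\in\Mc_N$, so $P_*$ is the unique minimizer of \eqref{eq:FDNLS}; and a rank-$N$ projector minimizing $\Tr(H_*\cdot)$ on ${\rm CH}(\Mc_N)$ must project onto a sum of lowest eigenspaces of $H_*$, i.e.\ satisfies the \emph{Aufbau} principle. If $P_*\notin\Mc_N$, then necessarily $\varepsilon_N=\varepsilon_{N+1}$ (otherwise $P_*$ would be the \emph{Aufbau} projector, which lies in $\Mc_N$), hence $\varepsilon_{N-1}<\varepsilon_N=\varepsilon_{N+1}<\varepsilon_{N+2}$ by the multiplicity bound; and if some $\bar P\in\Mc_N$ were a local minimizer of \eqref{eq:FDNLS} satisfying \emph{Aufbau}, convexity of $E_\alpha$ would give $E_\alpha(P)\ge E_\alpha(\bar P)+\Tr\prt{\nabla E_\alpha(\bar P)(P-\bar P)}$ for all $P$, while \emph{Aufbau} gives $\Tr(\nabla E_\alpha(\bar P)P)\ge\Tr(\nabla E_\alpha(\bar P)\bar P)$ for all $P\in{\rm CH}(\Mc_N)$, so $\bar P$ would be a global minimizer of \eqref{eq:relaxed}, forcing $\bar P=P_*\notin\Mc_N$---a contradiction. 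The one genuinely model-dependent step, and the one using $N_b\ne2N$ when $N_b\in4\N^\ast$, is the exclusion of the ``parity-split'' configuration of the two-dimensional Fermi eigenspace in the uniqueness proof for $P_*$; everything else is standard convexity/spectral bookkeeping plus the Perron--Frobenius and ``multiplicity $\le2$'' inputs.
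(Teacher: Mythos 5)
Your proof is correct and follows essentially the same route as the paper: convexity of $E_\alpha$, strict convexity in the density to pin down a unique $\rho_*$ (hence a unique $H_*$), the second-order-difference-equation bound on eigenspace dimension, Perron--Frobenius for $N=1$, and the parity-split analysis of a possible two-dimensional Fermi eigenspace to exclude nonuniqueness when $N_b\ne 2N$ or $N_b\notin 4\N^\ast$. Two remarks on the presentation.

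First, for $N\ge 2$ your repackaging of the nonuniqueness analysis is a cleaner way to organize the same computation as the paper. The paper, given two relaxed minimizers, splits into cases $a=f$ (forcing $\phi_i\psi_i=0$) and $a\ne f$ (forcing $\phi_i\in\{C_+\psi_i,C_-\psi_i\}$); you observe that both cases are the statement that a single nonzero real binary quadratic form $\lambda(x^2-y^2)+2\mu xy$ vanishes at all $N_b$ points $((w_1)_i,(w_2)_i)$, hence those points lie on two distinct lines through the origin (the discriminant $4(\lambda^2+\mu^2)$ is positive), and then the no-two-consecutive-zeros property forces the alternating parity structure. This avoids the case split and makes the mechanism transparent, but the content is identical.

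Second, there is a small gap in the $N=1$ uniqueness step as written. You appeal to ``convexity would make $E_\alpha$ affine on the joining segment,'' but the segment joining $\phi_1\phi_1^*$ and $\phi_2\phi_2^*$ lies in ${\rm CH}(\Mc_1)$, not in $\Mc_1$, so the inequality $E_\alpha\ge m$ along the segment (needed to conclude $E_\alpha$ is constant rather than merely $\le m$ there) is not automatic from minimality over $\Mc_1$ alone. It does hold, because you have already shown that any minimizer of \eqref{eq:FDNLS} (after taking componentwise moduli) satisfies the strong \emph{Aufbau} principle, and then the Euler inequality plus convexity shows it is a global minimizer of the relaxed problem \eqref{eq:relaxed} over ${\rm CH}(\Mc_1)$ as well --- exactly the argument you make explicitly in the last paragraph for $N\ge 2$. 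The paper sidesteps this by working with the relaxed problem from the start and showing separately (via Perron--Frobenius / the $||x|-|y||\le|x-y|$ identity) that $\varepsilon_1<\varepsilon_2$ always holds, so the relaxed minimizer is automatically a rank-one projector. Your presentation is fine once you insert the one sentence linking ``minimizer of \eqref{eq:FDNLS} satisfying strong \emph{Aufbau}'' to ``minimizer of \eqref{eq:relaxed}.''
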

Note that the unique minimizer $P_*$ to the relaxed constraint problem \eqref{eq:relaxed} can be computed using
the optimal damping algorithm (ODA) introduced
in~\cite{cancesCanWeOutperform2000b}. As shown in the proof of
Lemma~\ref{lem:NLS}, the only case when the minimizer $P_*$ is not
unique is the very particular case when $N_b \in 4\N^\ast$, $N_b=2N$, and all
the entries $[V_{\rm eff}]_i\coloneqq V(i\delta)+\alpha \delta^{-1}[P_*]_{ii}$
of the effective potential are equal. In the rest of this section, we consider
the cases $N=1$ and $N=2$.

\subsubsection*{The case $N=1$} It follows from
Lemma~\ref{lem:NLS}, Theorem~\ref{thm:grad} and Theorem~\ref{thm:scf} that
Algorithm~\ref{algo:grad} and Algorithm~\ref{algo:scf} locally converge to the
unique minimizer $P_*$ as long as $\beta$ is chosen small enough.
The resulting densities, effective potentials and convergence behavior of both algorithms are plotted in
Figure~\ref{fig:num_res} for $N_b = 100$.
The SCF algorithm converges faster in terms of number of iterations, as a smaller $\beta$, hence
more steps, are required for the gradient to converge. This is expected
from the large spectral radius of the matrix $h$ in the absence of preconditioning.

For the gradient algorithm, the convergence rate is consistent with
the spectral radius of the Jacobian matrix $1 - \beta J_\text{grad}$.
For the damped SCF algorithm with the ground state of the core
Hamiltonian as starting point, surprisingly, we observe an asymptotic
convergence rate slightly faster than that expected from the spectral
radius of the Jacobian matrix $1 - \beta J_\text{SCF}$. Using a random
perturbation of the ground state of the core Hamiltonian as starting
point again gives a convergence rate consistent with the spectral
radius.

The explanation of this effect is to be found in the repartition of
the error among the eigenvectors of $J_{\text{SCF}}$. Since both $\Op$
and $K_*$ are positive semidefinite operators, the eigenvalues of
$J_\text{SCF} = 1 + \Op^{-1}K_*$ are greater than 1, and the
convergence for $\beta$ small is limited by the modes associated with
eigenvalues of $J_{\text{SCF}}$ close to 1. These eigenvalues
correspond to high eigenvalues of $\Op$, and therefore to highly
oscillatory modes. When the initial guess is chosen as the ground
state of the core Hamiltonian, these modes are only weakly excited and
do not contribute to the observed convergence rate before convergence
is achieved. When the initial guess is randomly perturbed, this effect
is not present and the convergence rate is consistent with the
spectral radius. For the gradient algorithm, the rate-limiting modes
are associated with small eigenvalues of $\Op + K_{*}$, which are not
oscillatory, and this effect is not present either.

\vfill
\begin{figure}[h!]
  \hfill
  \subfigure[Density $\rho$ at convergence.]{
      \includegraphics[scale = 0.8]{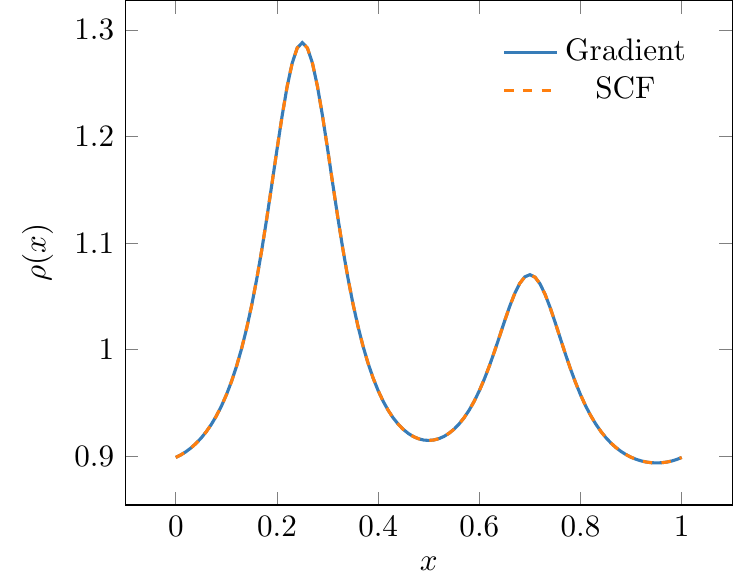}
  }
  \hfill
  \subfigure[Effective potential $V_\text{eff} = V + \alpha\rho$ at convergence.]{
      \includegraphics[scale = 0.8]{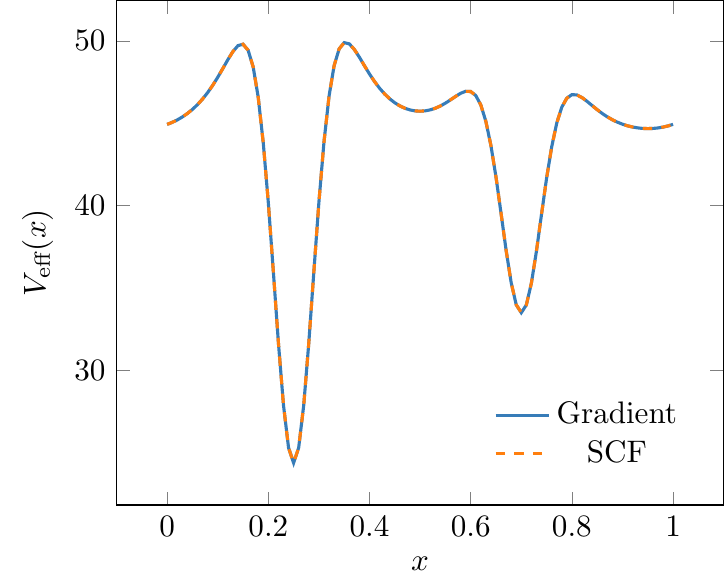}
  }
  \hfill{~}

  \subfigure[Error decay for SCF and gradient descent algorithms
  (every few mark only is plotted for the sake of visibility). Marked lines are
  the evolution of the error $\normF{P^{k+1} - P^k}$ and dashed lines represents
  the slope computed with the spectral radius of the Jacobian matrix, computed
  by finite differences.]{
    \includegraphics[scale = 0.8]{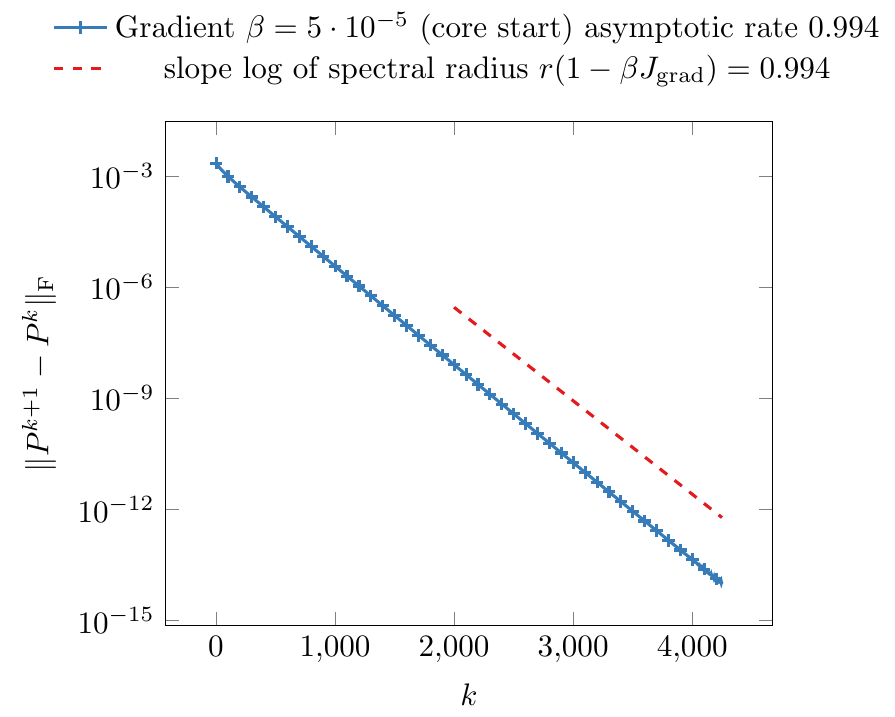}
    \hfill
    \includegraphics[scale = 0.8]{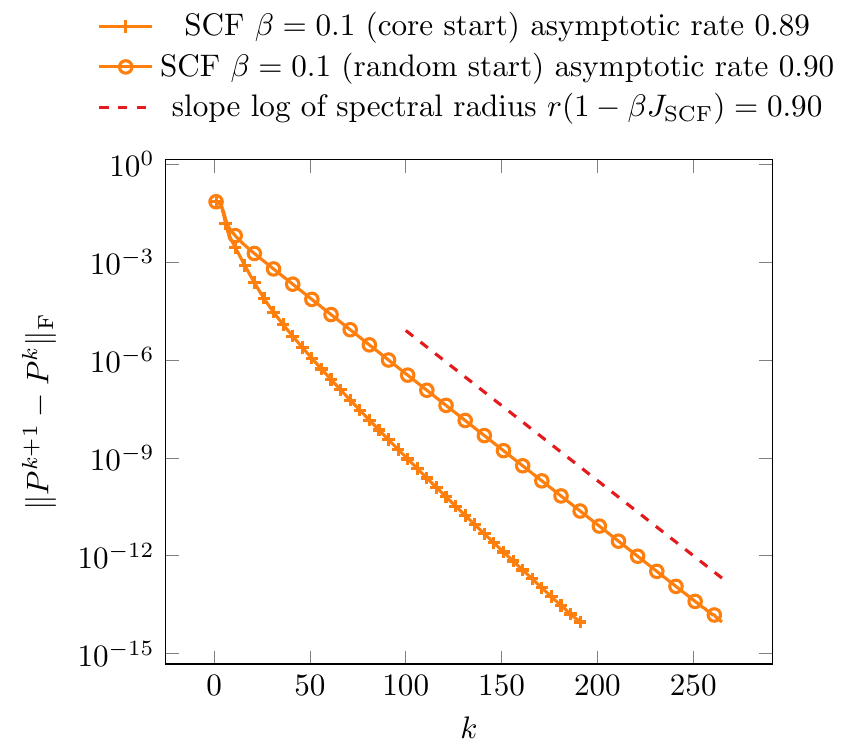}
  }
  \caption{Convergence of Algorithm~\ref{algo:grad} and Algorithm~\ref{algo:scf}
    for $N=1$, $\alpha = 50$ and $N_b = 100$.}
  \label{fig:num_res}
\end{figure}
\vfill

\clearpage
\subsubsection*{The case $N=2$}
Since the second smallest eigenvalue of the matrix $h$ is
strictly lower than the third one, for $\alpha$ small enough, the
unique minimizer $P_*$ of \eqref{eq:relaxed} is on $\Mc_2$ and
satisfies the strong {\em Aufbau} principle, and both the gradient
descent and SCF algorithm locally converge to $P_*$. For larger values of
$\alpha$, the two alternatives of Lemma~\ref{lem:NLS} appear. We plot the energy, the
density at an arbitrarily chosen point and the eigenvalues of the
solutions $P^{\rm grad}$ and $P^{\rm ODA}$
obtained by the gradient and the ODA algorithm as a function of
$\alpha$ in Figure~\ref{fig:bifur}, evidencing a bifurcation for a
critical value of $\alpha_{\rm c}\simeq 10$, after which these two
solutions are different.

\begin{figure}[h!]
  \hfill
  \subfigure[Bifurcation on the energy.\label{fig:bif_nrj}]{
      \includegraphics[scale = 0.8]{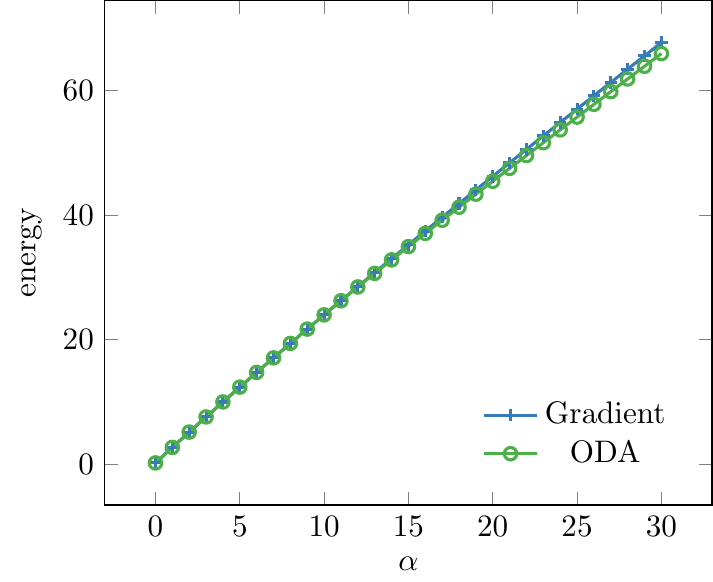}
  }
  \hfill
  \subfigure[Bifurcation on the value of $\rho_7$ ($7^{\rm th}$ element of the
  discrete density for $N_b = 40$).\label{fig:bif_rho}]{
      \includegraphics[scale = 0.8]{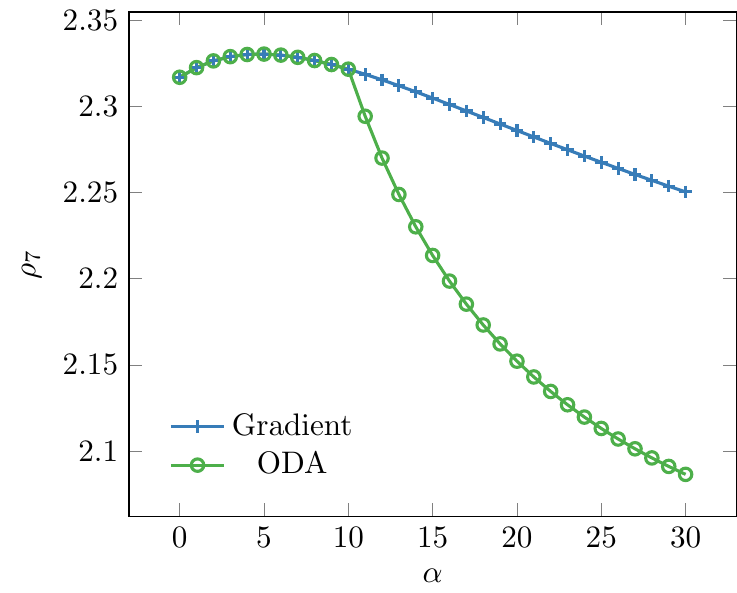}
  }
  \hfill{~}

  \hfill
  \subfigure[Bifurcation on the first three eigenvalues of the Hamiltonians
  $H(P_*^\text{ODA})$ and $H(P_*^\text{grad})$. For
  ODA, after the collision, the second and third eigenvalues are
  degenerate.
  \label{fig:bif_eps}]{
      \includegraphics[scale = 0.8]{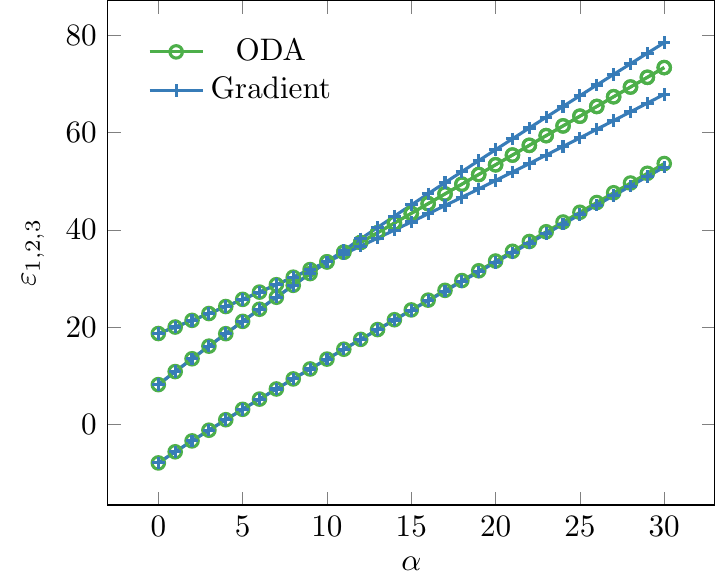}
  }
  \hfill
  \subfigure[ODA minimum before (left) and after (right) the bifurcation.
  Dashed lines are the energy level sets around the minimum.
  \label{fig:bifur_oda}]{\includegraphics[scale = 0.85]{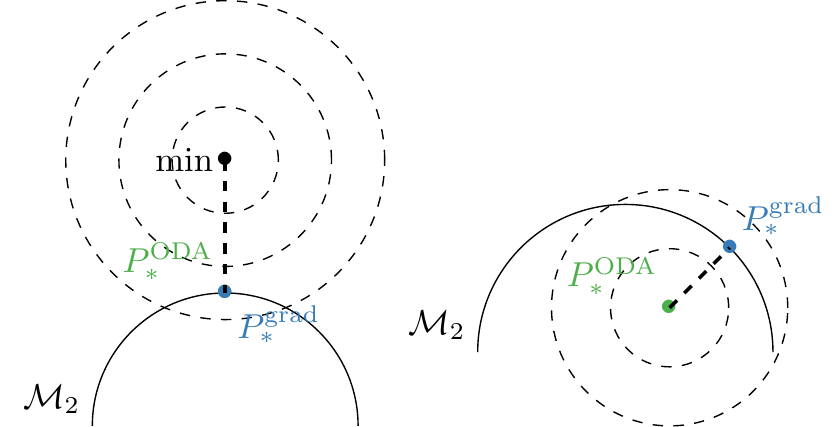}}
  \hfill{~}
  \caption{Bifurcation on the energy, the density and the eigenvalues
    as a function of $\alpha$
    for $N=2$, $N_{b}=40$.}
  \label{fig:bifur}
\end{figure}

For $\alpha$ lower than the critical value $\alpha_{\rm c}\simeq 10$,
$P_*^{\rm ODA}$ is on the manifold $\Mc_2$ and satisfies the strong
{\em Aufbau} principle. The algorithms all converge to this solution:
$P_{*}^{\rm grad} = P_{*}^{\rm SCF} = P_{*}^{\rm ODA} = P_{*}$.
However for $\alpha > \alpha_{\rm c}$ in the range tested,
$P_*^{\rm ODA} \notin \Mc_2$. A geometrical interpretation of the
bifurcation is sketched on Figure~\ref{fig:bifur_oda}: the level sets of
the function $E_{\alpha}$ are degenerate ellipsoids. Below
$\alpha_{\rm c}$, the intersection of the nonempty closed convex set
${\rm CH}(\Mc_2)$ with the level set of $E_{\alpha}$ of lowest energy
belongs to $\Mc_2$, while this is no longer the case beyond
$\alpha_{\rm c}$.

For $\alpha > \alpha_{\rm c}$, the solutions obtained by the ODA,
gradient and SCF algorithm differ, as shown in Figure~\ref{fig:diff_sol}:
\begin{itemize}
  \item the lowest second and third eigenvalues of $H(P_*^{\rm ODA})$ are
    degenerate ($\varepsilon_2=\varepsilon_3$) and $P_*^{\rm ODA} \notin \Mc_2$.
    More precisely, we have
$$
P_*^{\rm ODA}=\phi_1\phi_1^* + (1-f) \phi_2\phi_2^* + f \phi_3\phi_3^* \quad
\mbox{with} \quad H(P_*^{\rm ODA})\phi_i=\varepsilon_i\phi_i, \quad
\phi_i^*\phi_j=\delta_{ij}
$$
with a fractional occupation $0 < f < 1$;
\item Algorithm~\ref{algo:grad} and Algorithm~\ref{algo:scf} converge to two different limits $P^{\text{grad}}_*$ and
$P^{\text{SCF}}_*$, none of them satisfying the {\em Aufbau} principle:
\begin{itemize}
  \item  $P^{\text{grad}}_*$ is a local minimizer of $E_\alpha$ on $\Mc_2$,
    which does not satisfies the {\em Aufbau} principle. More precisely,
    $P^{\text{grad}}_*$ is the orthogonal projector on the space generated by
    the eigenvectors associated with the lowest first and third eigenvalues of
    $H(P^{\text{grad}}_*)$;
  \item $P_*^\text{SCF}$ satisfies
    $\Pi_{P_*^\text{SCF}}\prt{\auf(P_*^\text{SCF})-P_*^\text{SCF}} =
    0$, but $\auf(P_*^\text{SCF}) - P_*^\text{SCF} \neq 0$ and
    $[H(P_*^\text{SCF}),P_*^\text{SCF}]~\neq ~0$. The iterates are
    trapped as the search direction is orthogonal to the tangent space
    (Figure~\ref{fig:diff_sol_draw}). The limit point $P_*^\text{SCF}$
    is a spurious stationary state of the SCF iteration which is not
    physically relevant, not being a critical point of $E_\alpha$.
\end{itemize}
\end{itemize}

\begin{figure}[h!]
  \hfill
  \subfigure[Densities of $P^{\text{grad}}_*$, $P^{\text{SCF}}_*$ and $P^{\text{ODA}}_*$.\label{fig:diff_sol_plot}]{
    \includegraphics[scale = 0.8]{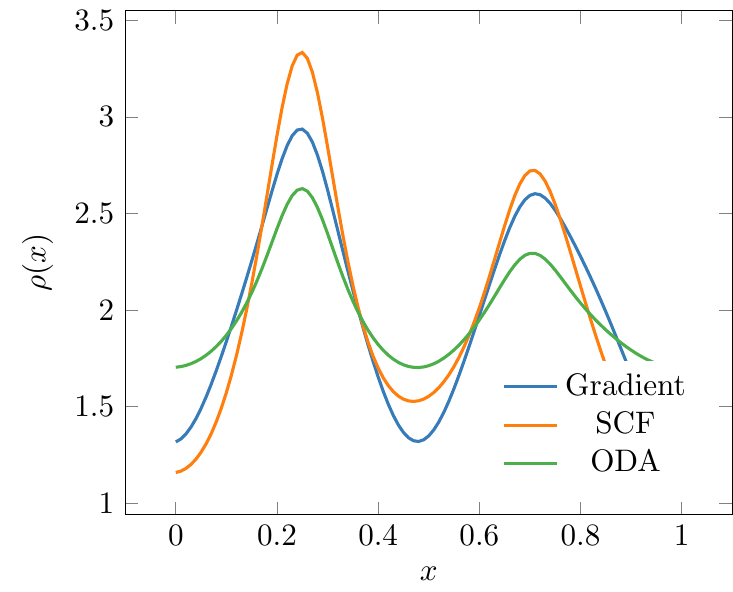}
  }
  \hfill
  \subfigure[Effective potential $V_\text{eff} = V + \alpha\rho_{P}$ for $P=P^{\text{grad}}_*$, $P^{\text{SCF}}_*$ and $P^{\text{ODA}}_*$.
  \label{fig:diff_Veff_plot}]{
    \includegraphics[scale = 0.8]{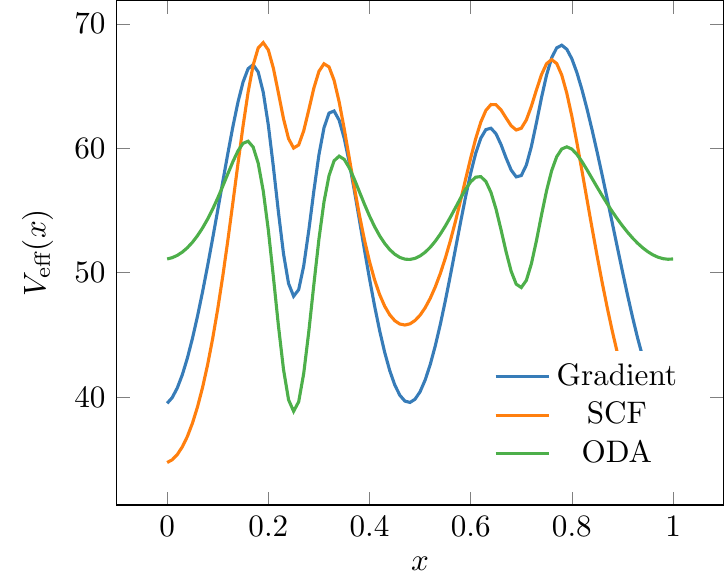}
  }
  \hfill{~}

  \hfill
  \subfigure[Geometrical interpretation of the limiting points of the gradient descent, SCF et ODA algorithms.
  \label{fig:diff_sol_draw}]{
    \includegraphics[scale = 0.8]{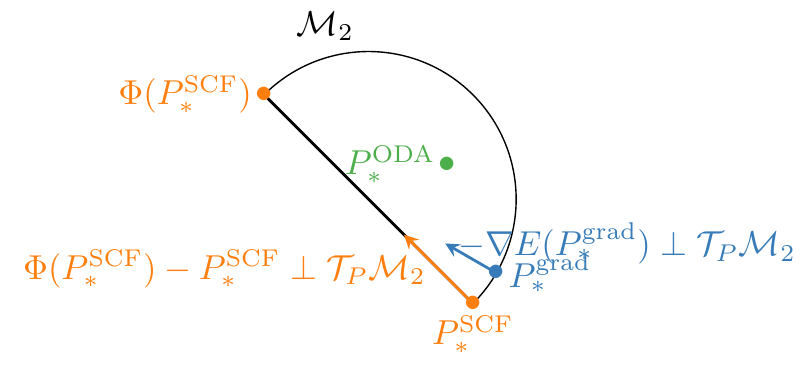}
  }
  \hfill
  \subfigure[Lowest three eigenvalues of $H(P)$ for $P=P_*^{\rm
    grad}$, $P=P_*^{\rm SCF}$ and $P=P_*^{\rm ODA}$.
  \label{fig:diff_sol_eig}]{
    \includegraphics[scale = 0.8]{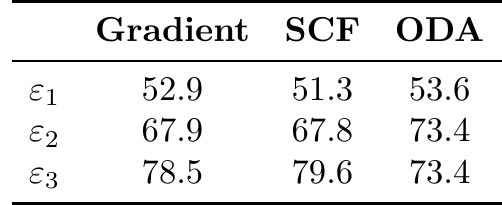}}
  \hfill{~}

  \caption{Results obtained with the gradient descent, damped SCF and
    ODA algorithm for $N=2$, $\alpha=30$ and $N_b = 100$: the limiting
    points $P_*^{\rm grad}$ and $P_*^{\rm SCF}$ lay by construction on the
    manifold $\Mc_2$, while $P_*^{\rm ODA}$ does not (it only belongs to its
    convex hull ${\rm CH}(\Mc_2)$).}
  \label{fig:diff_sol}
\end{figure}

\subsection{Kohn-Sham density functional theory}
\label{sec:DFTK}
We now investigate a more realistic computation: the electronic structure of a Silicon crystal
using Kohn-Sham density functional theory (KS-DFT). We used the \texttt{DFTK.jl}
code~\cite{michael_f_herbst_2020_3703064}, which solves the equations
of KS-DFT in a plane-wave basis under a pseudopotential approximation.
All computations below use the local density approximation (LDA) of the
exchange-correlation energy \cite{Kohn1965,martin2004}, Goedecker-Teter-Hutter (GTH)
pseudopotentials \cite{goedecker1996separable}, a cutoff energy of 30 Hartrees, and
$\Gamma$-only Brillouin zone sampling for simplicity, although the
same behavior was observed with different exchange-correlation
functionals and fine Brillouin zone discretizations. In all cases, the
initial guess for the algorithms is a superposition of atom-centered
densities. The DFTK code as well as the script used to produce these
results are available at \url{https://dftk.org/}.

We consider the case of Silicon in its standard face-centered cubic
phase. With the chosen pseudopotentials and without spin polarization,
Silicon has four occupied orbitals: $N=4$. We examine the convergence
of algorithms as a function of the lattice constant $a$ (the size of
the computational domain). In the equilibrium state of Silicon ($a \approx 10.26$
Bohrs), there is a gap of about $0.08$ Hartree between the occupied
and virtual states. As the lattice constant $a$ is increased, this gap
decreases, until it closes at $a \approx 11.408$ Bohrs. We examine the
convergence of self-consistent iterations, using both fixed-step
damped density mixing with four values of the mixing parameter
$\beta$ ($\beta=1$ -- no damping --, $\beta=0.5$, $\beta=0.2$,
$\beta=0.1$), as well as the self-consistent iteration accelerated
with Anderson acceleration (also known as Pulay's DIIS
method~\cite{chupin2020convergence,Pulay1980,Pulay1982}). We plot the
convergence of the density residual $\|\rho_{\auf(P_{n})} - \rho_{P_{n}}\|_{2}$ as a
function of the iterations for three values of $a$, with decreasing
gaps.

\begin{figure}[h!]
  \centering
  \hfill
  \includegraphics[scale = 0.8]{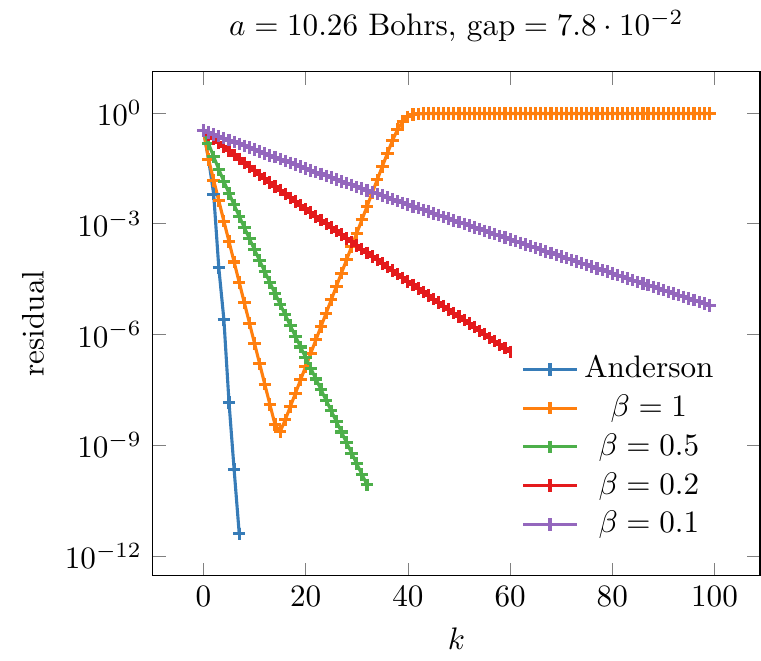}
  \hfill
  \includegraphics[scale = 0.8]{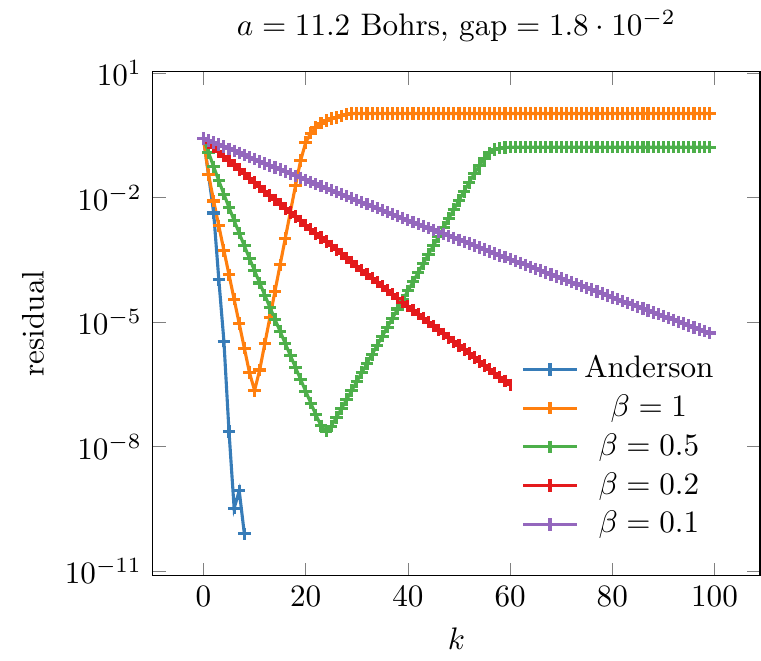}
  \hfill{~}

  \includegraphics[scale = 0.8]{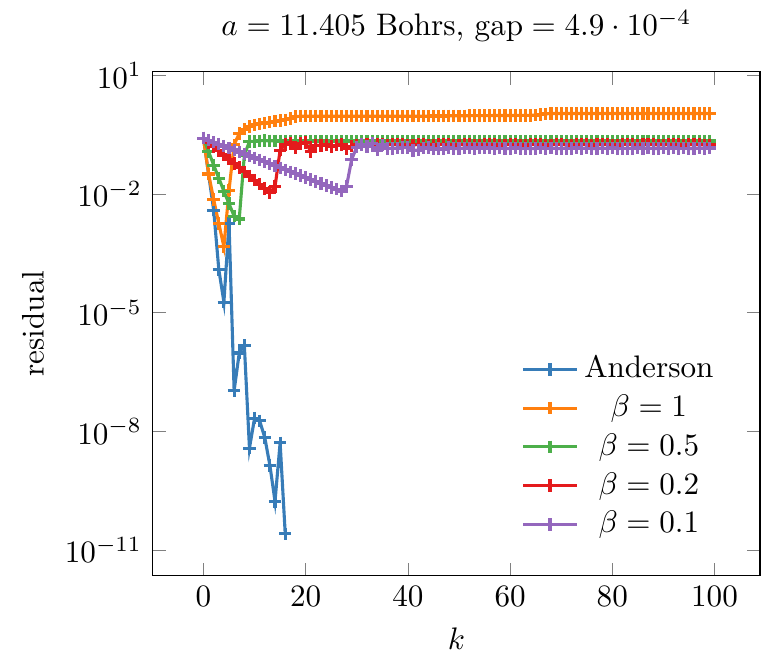}
  \caption{Convergence curves of the density residual as a function of the number of iterations $k$ for Silicon with different lattice
    constants $a$.}
  \label{fig:silicon}
\end{figure}

In the first case, with $a = 10.26$ Bohrs, the simple (undamped) SCF method appears
to be converging for almost 20 iterations, but then diverges, until
the density residual stabilizes at a positive value, as predicted
in~\cite{cancesConvergenceSCFAlgorithms2000c} for the Hartree-Fock
model. The damped methods appear to converge. For $a = 11.2$ Bohrs, the
damping method with $\beta = 0.5$ does not converge. When the lattice
constant is further increased to $11.405$ Bohrs, with a small gap of $4.9
\times 10^{-4}$ Hartree, the fixed-step damped SCF iterations do not converge
for the tested values of the damping parameter ($\beta=1, 0.5, 0.2, 0.1$).

When it occurs, the transient behavior of apparent-convergence before
eventual divergence is unusually long. For instance for $\beta=1$ at
$a=10.26$ Bohrs, the method appears to be converging for almost $20$
iterations, up to a reduction in residual of a factor $10^{-8}$. In
fact, it is consistent with an initial error of the order of machine
precision (about $10^{-16}$ here) being amplified at a constant rate.
The cause of this effect appears to be that the divergent modes break
the natural inversion symmetry of the crystal in this particular case:
we have checked that the divergence occurs much sooner if we break
this symmetry by perturbing the positions of the atoms around their
symmetric positions (at $9$ iterations by perturbing the position of
one atom by $10\%$).
In practice, in the symmetric case, one way to overcome this issue is to ensure
during the algorithm that, at each step, we have a symmetric solution.
Note that this phenomenon is reminiscent of that observed in Figure~\ref{fig:num_res}, where all the modes were not
fully excited, making the convergence faster than expected.

It is remarkable that the convergent methods (and even the divergent
ones before their divergence) appear to have the exact same slope as
with $a=10.26$ Bohrs. This is consistent with our result: assuming the main
effect of increasing the lattice constant is to decrease the gap,
while keeping the lowest eigenvalues of $\Omega_{*}^{-1} K_{*}$
constant, then for $\beta$ small enough the convergence is limited by
the lowest, not the highest, eigenvalues of this operator.

In all these cases, Anderson acceleration was able to converge to a
solution, even in presence of a very small gap, albeit in an irregular
fashion. We attribute this to the well-known fact that, in the linear
regime, Anderson acceleration is equivalent to the GMRES algorithm.
Since the GMRES algorithm is a Krylov method, it is robust to the
presence of a large eigenvalue, and achieves convergence even though
the underlying iteration is strongly divergent. This shows a limitation of our
theoretical convergence rates, which do not capture the reduced
sensitivity of accelerated methods to a small gap.

\section{Conclusion}
\label{sec:conclusion}
In this paper, we examined the convergence of two simple
representatives in the class of direct minimization and SCF
algorithms. We showed that both algorithms converge locally when the
damping parameter is chosen small enough. We derived their convergence
rates; we showed that the damped SCF algorithm is sensitive to the
gap, while the gradient method is sensitive to the spectral radius of
the Hamiltonian. We confirmed these results with numerical
experiments. The goal here was not to propose efficient algorithms, but to
analyze the behavior of the simplest representatives of each class.
However, accelerated algorithms are generally found to follow the trend suggested by our
theoretical results, although we showed that the Anderson-accelerated
SCF algorithm was able to converge quickly even in the presence of a
single very small gap.

In practice, should the SCF or direct minimization class of algorithms
be preferred? The answer depends not only on the convergence rate
studied in this paper, but also on the cost of each step, and the
robustness of the algorithm. We examine two prototypical situations.

In quantum chemistry using Gaussian basis sets to solve the
Hartree-Fock model or Kohn-Sham density functional theory using hybrid
functionals, the rate-limiting step is often the computation of the
Fock matrix $H(P)$. In this case, an iteration of a gradient descent
and a damped SCF algorithm are of roughly equal cost. In most cases,
solutions for isolated molecules satisfy the Aufbau principle, and the
damped SCF algorithm, suitably robustified (for instance using the ODA
algortithm) and accelerated (for instance with the DIIS algorithm),
converges reliably and efficiently towards a solution. Direct
minimization algorithms are then only useful in the cases where local
or semilocal functionals are used \cite{rudberg2012difficulties} and the Aufbau
principle is violated, or when SCF algorithms tend to converge to
saddle points (for instance for computations involving spin).

In condensed-matter physics using plane-wave basis sets to solve the
Kohn-Sham density functional theory with local or semilocal
functionals, the matrices $P$ and $H$ are not stored explicitly.
Solving the linear eigenproblem is then done using iterative block
eigensolvers, which can be understood as specialized direct
minimization algorithms in the case of a linear energy functional
$E(P) = \Tr(H_{0} P)$. In this case, direct minimization algorithms
effectively merge the two loops of the SCF and linear eigensolver, and
should therefore be more efficient. Another interest of direct
minimization algorithms is their robustness, as the choice of a
stepsize can be made in order to minimize the energy, unlike the
damped SCF algorithm where choosing an appropriate damping parameter
is often done empirically.

Despite this, direct minimization algorithms are rarely used in
condensed-matter physics. The main reason seems to be that challenging
problems are often metallic in character, and require the introduction
of a finite temperature. Direct minimization algorithms then need to
optimize over the occupations as well as the orbitals, a significantly
more complex task \cite{cances2001self,
  freysoldt2009direct,marzari1997ensemble}. A thorough comparison of the performance and
robustness of direct minimization and self-consistent approach for
these systems would be an interesting topic of inquiry. A number of
implementation ``tricks'' commonly used to accelerate the convergence
of iterative eigensolvers (for instance, using a block size larger
than the number of eigenvectors sought) might also play a large role
in performance comparison for the two classes of algorithms:
understanding how to generalize these to direct minimization would be
interesting.

We discussed in Remark~\ref{rmk:prec} preconditioning for both direct
minimization and SCF algorithms. The concept of preconditioning for
Riemannian optimization problems seems not to have been
explored much in the mathematical literature, except in some specific models
and preconditioners (see for instance
\cite{baoComputingGroundState2004,zhangExponentialConvergenceSobolev2019} for
the Gross-Pitaevskii model), and a deeper analysis of this would be interesting. In particular, this is
necessary to extend the convergence theory presented in this paper to
infinite-dimensional settings.

\section*{Appendix: proof of Lemma~\ref{lem:NLS}}
For any $\alpha \in \R_+$ the energy functional $E_\alpha$ is smooth
and convex on the nonempty compact convex set ${\rm CH}(\Mc_N)$. The set of
minimizers to \eqref{eq:relaxed} is therefore nonempty, compact and convex.
Let $P_*$ and $P_*'$ be two minimizers of $E_\alpha$ and let $\rho_*$ and
$\rho_*'$ be their densities: $\rho_{*,i} = \delta^{-1}
(P_{*})_{ii}$ and $\rho_{*,i}' = \delta^{-1} (P_{*}')_{ii}$. For all $\theta \in [0,1]$, we have
$$
I_\alpha=E_\alpha(\theta P_* + (1-\theta) P_*') = I_\alpha +
\frac{\alpha}{2 \delta}\sum_{i=1}^{N_b}
\left( (\theta \rho_{*,i}+(1-\theta) \rho_{*,i}')^2 - (\theta
  \rho_{*,i}^2+(1-\theta){\rho_{*,i}'}^2
) \right),
$$
where $I_\alpha = E_\alpha(P_*)=E_\alpha(P_*')$ is the minimum of
\eqref{eq:relaxed}. Since the function $\R \ni x \mapsto x^2 \in \R$ is
strictly convex, we obtain that $\rho_*=\rho_*'$. Therefore, all the
minimizers of \eqref{eq:relaxed} share the same density, hence the same
mean-field Hamiltonian matrix $H_*$. If $P_*$ is a minimizer of
\eqref{eq:relaxed}, it satisfies the first order optimality condition (Euler
inequality)
$$
\Forall P \in {\rm CH}(\Mc_N), \quad \Tr(H_*(P-P_*)) \ge 0,
$$
from which we infer by a classical argument that
\begin{equation}\label{eq:minRC}
P_* = {\mathds 1}_{(-\infty,\mu)}(H_*) + Q_* \quad \mbox{with} \quad 0 \le Q_*
\le 1, \quad \mbox{Ran}(Q_*) \subset  \mbox{Ker}(H_*-\mu), \quad \Tr(P_*)=N,
\end{equation}
for some Fermi level $\mu \in \R$ (the Lagrange multiplier of the constraint $\Tr(P)=N$).
Let $\varepsilon_1 \le \cdots \le \varepsilon_{N_b}$ be the eigenvalues of
$H_*$, counting multiplicities. If $\varepsilon_N < \varepsilon_{N+1}$, then
we necessarily have $P_* = {\mathds 1}_{(-\infty,\varepsilon_N]}(H_*)$, so
that \eqref{eq:relaxed} has a unique minimizer, $P_*$ is on $\Mc_N$ and
therefore is also the unique minimizer of \eqref{eq:FDNLS}, and it satisfies
the strong {\em Aufbau} principle.

Let us now consider the case when $\varepsilon_N =
\varepsilon_{N+1}\eqqcolon\mu$. Since the eigenvalue problem
$H_*\psi=\mu\psi$ is a second-order difference equation
\begin{equation}\label{eq:disc_ee}
\frac{-\psi_{i+1}+2\psi_i-\psi_{i-1}}{2\delta^2} + V_{{\rm eff},i} \psi_i = \mu \psi_i, \quad 1 \le i \le N_b,
\end{equation}
(here and in the sequel we use the convention that $\psi_0=\psi_{N_b}$ and $\psi_{N_b+1}=\psi_1$) with $V_{\rm eff} = V + \alpha\rho_*$, the eigenspace
$\mbox{Ker}(H_*-\mu)$ is at most of dimension 2. We therefore have
$\varepsilon_{N-1} < \varepsilon_N = \varepsilon_{N+1} <
\varepsilon_{N+2}$.

Using the variational characterization of the ground state eigenvalue, we have
\begin{equation}\label{eq:minmax}
\varepsilon_1 = \min_{\psi \in \R^{N_b}, \, \psi^*\psi=1} \psi^* H_*\psi \quad
\mbox{with} \quad \psi^* H_*\psi = \sum_{i=1}^{N_b} \left|
  \frac{\psi_{i+1}-\psi_i}{\delta} \right|^2 + \sum_{i=1}^{N_b} V_{{\rm eff},i}
|\psi_i|^2.
\end{equation}
Since $||x|-|y|| \le |x-y|$ for all $x,y \in \R$ with equality if and only if
$x$ and $y$ have the same sign, we infer from \eqref{eq:minmax}, that all the
entries of a ground state eigenvector of $H_*$ have the same sign. In
particular, two normalized ground state eigenvectors of $H_*$ cannot be
orthogonal. This implies that the ground state eigenvalue of $H_*$ is simple,
i.e. $\varepsilon_1 < \varepsilon_2$. The first statement of Lemma~\ref{lem:NLS}
staightforwardly follows from the results established so far.

To prove the second statement, assume that $N \ge 2$ and that
\eqref{eq:relaxed} has two distinct minimizers $P_*$ and $P_*'$ sharing the same
density. In view of \eqref{eq:minRC},
this can only occur if $\varepsilon_N = \varepsilon_{N+1}\eqqcolon\mu$. Using an
orthonormal basis $(\phi,\psi)$ of $\mbox{Ker}(H_*-\mu)$ consisting of
eigenvectors of $P_*$, we can assume without loss of generality that
\begin{align*}
P_* &= {\mathds 1}_{(-\infty,\mu)}(H_*) + (1-f) \phi\phi^* + f \psi\psi^*, \\
P_*' &= {\mathds 1}_{(-\infty,\mu)}(H_*) + (1-a) \phi\phi^* + a \psi\psi^* + b (\phi\psi^*+\psi\phi^*),
\end{align*}
with $0 \le f \le 1$, $0 \le a \le 1$ and $b^2 \le a(1-a)$. Since $P_*$ and $P_*'$ have the same density, we have for all $1 \le i \le N_b$,
$(1-f) \phi_i^2  + f \psi_i^2 = (1-a) \phi_i^2 + a \psi_i^2+ 2 b \phi_i\psi_i$
, that is $(a-f) \phi_i^2 - 2 b \phi_i\psi_i - (a-f) \psi_i^2=0$.

If $a=f$, then $b \neq 0$ since $P_*\neq P_*'$ by assumption, so that
$\phi_i\psi_i = 0$ for all $1 \le i \le N_b$. From \eqref{eq:disc_ee}, we see
that it is not possible to have $\psi_i=\psi_{i+1}=0$ (otherwise, $\psi$ would
be identically equal to zero), and the same holds true for $\phi$. Therefore,
$N_b$ must be even, and either all the odd entries of $\phi$ and all the even
entries $\psi$ must vanish, or the other way round.  We then infer from
\eqref{eq:disc_ee} that this implies that
$\phi_{i+2}+\phi_i=\psi_{i+2}+\psi_i=0$ for all $1 \le i \le N_b$, and
that all the entries of $V_{\rm eff}$
are equal to $\mu-\delta^{-2}$. This implies that $N_b \in 4 \N^\ast$ and
that the states $\phi$ and $\psi$ are given by $\phi_{2i}=c(-1)^i$,
$\phi_{2i+1}=0$, $\psi_{2i}=0$, $\psi_{2i+1}=c'(-1)^i$ for all $1 \le
i \le N_b$, where $c$ and $c'$ are normalization constants. By
explicit diagonalization of the matrix $H_{*} = H(0) + (\mu - \delta^{-2})
I_{N_{b}}$ one can check that the states  $\phi$ and $\psi$ are therefore
those spanning the two-dimensional space associated to the two-fold
degenerate eigenvalues $\varepsilon_{N_b/2} = \varepsilon_{1+N_b/2}$ of $H_*$. This is only possible if
$N=N_b/2$. The case $\alpha=f$ can thus be excluded for $2 \le N \le N_b$, with $N_b \ne 2N$ if $N_b \in 4\N^\ast$.

If $a \neq f$, we have for all $1 \le i \le N_b$,
$\phi_i^2 - 2 \gamma \phi_i\psi_i - \psi_i^2=0$, for
$\gamma=\frac{b}{a-f}$, and up to replacing $\psi$ with $-\psi$, we
can assume without loss of generality that $\gamma \ge 0$. Denoting by
$C_\pm\coloneqq\gamma \pm \sqrt{1+\gamma^2}$ the roots of the polynomial
$x^2-2\gamma x +1$, with $C_+C_-=-1$, we obtain that for each
$1 \le i \le N_b$, either $\phi_i= C_+ \psi_i$ or $\phi_i= C_- \psi_i$. Using
the discrete Schr\"odinger equation~\eqref{eq:disc_ee} satisfied by both $\phi$
and $\psi$, we see that if $\phi_i= C_+ \psi_i$ and
$\phi_{i+1}= C_+ \psi_{i+1}$ for some $1 \le i \le N_b$, then  $\phi= C_+ \psi$,
and likewise if $C_+$ is replaced by $C_-$. This is impossible since $\phi$
and $\psi$ are orthonormal. Therefore, we must have
$\phi_{2i}= C_+ \psi_{2i}$ and $\phi_{2i+1}=C_-\psi_{2i+1}$ (or the other way
around), and $N_{b}$ must be even. Using again~\eqref{eq:disc_ee}, this leads to $\phi_{i+2}+\phi_i=0$ and
$\psi_{i+2}+\psi_i=0$ for all $1 \le i \le N_b$ and therefore, as in the
previous case, that $N_b \in 4 \N^\ast$, $N_b=2N$, that all the entries of $V_{\rm eff}$ are equal and that
$\phi$ and $\psi$ span the two-dimensional space associated to the two-fold
degenerate eigenvalues $\varepsilon_{N} = \varepsilon_{N+1}$ of $H_*$.

This proves that for $2 \le N \le N_b$, with $N_b \ne 2N$ if $N_b \in 4 \N^\ast$, \eqref{eq:relaxed} has a unique
minimizer $P_*$. If $P_* \in \Mc_N$, it is of course also the unique minimizer
of  \eqref{eq:FDNLS}, and $P_*$ satisfies the {\em Aufbau} principle.
Conversely, if $P_*' \in \Mc_N$ is a local minimizer of \eqref{eq:FDNLS}
satisfying the {\em Aufbau} principle, we have
$$
\Forall P \in {\rm CH}(\Mc_N), \quad \Tr(H(P_*')(P-P_*')) \ge 0,
$$
which means that $P'_*$ is a solution to the Euler inequality for \eqref{eq:relaxed}, and therefore a global minimizer of this convex problem. Since the minimizer $P_*$ of \eqref{eq:relaxed} is unique, we finally obtain that if $P_* \notin \Mc_N$, then none of the local minimizers of \eqref{eq:FDNLS} satisfies the {\em Aufbau} principle.

\section{Acknowledgement}
The authors would like to thank Michael F. Herbst and Sami Siraj-Dine for
fruitful discussions. This project has received funding from the European
Research Council (ERC) under the European Union's Horizon 2020 research and
innovation programme (grant agreement No 810367).

\nocite{*}
\bibliography{./biblio}
\bibliographystyle{abbrv}

\end{document}